\newcommand{\email}[1]{\href{mailto:#1}{#1}}
\theoremstyle{plain}
\newtheorem{theorem}{Theorem}
\newtheorem{proposition}[theorem]{Proposition}
\newtheorem{lemma}[theorem]{Lemma}
\theoremstyle{remark}
\newtheorem{remark}[theorem]{Remark}
\theoremstyle{definition}
\newcommand{\st}{\;:\;}
\newcommand{\Real}{\mathbb{R}}
\newcommand{\INT}{\mathrm{int}}
\newcommand{\EXT}{\mathrm{ext}}
\newcommand{\Th}{\mathcal{T}_h}
\newcommand{\Eh}{\mathcal{E}_h}
\newcommand{\Vh}{\mathcal{V}_h}
\newcommand{\VGh}{\mathcal{V}_{\Gamma,h}}
\newcommand{\ET}{\mathcal{E}_T}
\newcommand{\TE}{\mathcal{T}_E}
\newcommand{\EGh}{\mathcal{E}_{\Gamma,h}}
\newcommand{\Poly}[1]{\mathcal{P}^{#1}}
\newcommand{\cRoly}[1]{\mathcal{R}^{{\rm c},#1}}
\newcommand{\lproj}[2]{\pi^{#1}_{#2}}
\newcommand{\GT}{G_T^k}
\newcommand{\Gh}{G_h^k}
\newcommand{\pT}{p_T^{k+1}}
\newcommand{\Ih}{\underline{I}_h^k}
\newcommand{\IT}{\underline{I}_T^k}
\newcommand{\jump}[1]{[#1]_E}
\newcommand{\avg}[1]{\{#1\}_{k,E}}
\newcommand{\savg}[1]{\{#1\}_{\overline{k},E}}
\newcommand{\jumpG}[1]{[#1]_\Gamma}
\newcommand{\tF}{t_{\rm f}}
\title{A discrete de Rham discretization of interface diffusion problems with application to the Leaky Dielectric Model}
\author{Daniele A. Di Pietro}
\author{Simon Mendez}
\author{Aurelio E. Spadotto}
\affil{IMAG, Univ. Montpellier, CNRS, Montpellier, France,  
  \email{daniele.di-pietro@umontpellier.fr},
  \email{simon.mendez@umontpellier.fr},
  \email{aurelio-edoardo.spadotto@umontpellier.fr}
}
\begin{document}

\maketitle

\begin{abstract}
  Motivated by the study of the electrodynamics of particles, we propose in this work an arbitrary-order discrete de Rham scheme for the treatment of elliptic problems with potential and flux jumps across a fixed interface.
  The scheme seamlessly supports general elements resulting from the cutting of a background mesh along the interface.
  Interface conditions are enforced weakly \emph{à la Nitsche}.
  We provide a rigorous convergence of analysis of the scheme for a steady model problem and showcase an application to a physical problem inspired by the Leaky Dielectric Model. 
  \medskip\\
  \textbf{Key words:}
  Leaky Dielectric Model,
  interface diffusion problems,
  discrete de Rham methods,
  polytopal methods,
  weakly enforced interface conditions
  \smallskip\\
  \textbf{MSC2020:} 78M10, 65N30, 35J15
\end{abstract}



\section{Introduction}

The study of the dynamics of particles in an electric field is motivated by  numerous applications. Subjecting cells to an electric field is indeed a non-invasive and label-free method to obtain a high-throughput characterization of individual cells \cite{Sun.Morgan:10}. A classical device is the Coulter counter \cite{Coulter:53}, developed in the 1950s and still used in blood analyzers to count and size red blood cells \cite{Taraconat.Gineys.ea:21} using a DC field. Nowadays, the use of AC fields combined with the versatility of microfluidics make 
single-cell microfluidic impedance cytometry a promising technique for multiparametric cell characterization  \cite{Honrado.Bisegna.ea:21}.  
Other processes in which cells are subjected to an electric field include dielectrophoresis, a common technique used to manipulate or deform cells \cite{Du.Dao.ea:14}, and electroporation, in which pores are created in the membrane to enable exchange of molecules between the internal and the suspending media \cite{Neumann.Sowers.ea:89}.
From a more fundamental point of view, subjecting simpler particles like drops or vesicles to electrical fields has revealed fascinating dynamics when the internal and the external fluids have different conductivity and permittivity \cite{Vlahovska:19}: symmetry-breaking (rotation of the particle), large transient deformations with sharp edges for vesicles\ldots  

Motivated by the problem of the deformation of a drop in a steady electric field  \cite{Taylor:66}, G. I. Taylor introduced the so-called Leaky Dielectric Model, in which bulk fluids are assumed to be free of charge, so that the coupling between between electric and mechanical phenomena occurs at the interface. Since then, the Leaky Dielectric Model has been shown to provide valuable theoretical insights in the field of electrohydrodynamics of particles \cite{Saville:97,Vlahovska:19}.
In the context of this model, the electrostatic potential is a scalar-valued field which is harmonic in the bulk while, on the droplet interface, fulfills conditions that express the continuity of current through the membrane as well as a drop in potential called \emph{Galvani potential} \cite{Mori.Young:18}.
When seeking numerical solutions to this problem, tracking the position of the interface by means of an adapted mesh can be challenging and impact on the cost of the simulation.
This calls for solutions where the mesh is not redesigned after each displacement of the membrane.

The literature on numerical methods for interface problems is vast, and we will limit ourselves here to works that bear relations with the present approach.
A large number of methods rely on a background mesh which is not compliant with the interface, and are therefore referred to as \emph{unfitted}.
In the Generalized/Extended Finite Element method \cite{Sukumar.Moes.ea:00,Strouboulis.Babuska.ea:00}, non-polynomial functions with compact support are added to the discrete space in order to capture the behavior at the interface.
In the Immersed Finite Element method \cite{Adjerid.Babuska.ea:23}, the added functions are piecewise polynomials.
In the CutFEM method \cite{Burman.Claus.ea:15}, interface conditions are taken into account by using discontinuous basis functions inside the elements cut by the interface and relying on Nitsche's techniques for their enforcement.
The CutFEM principles have been recently extended to the Hybrid High-Order method \cite{Burman.Ern:18,Burman.Ern:19}, which supports much more general element geometries than standard Finite Elements; see \cite{Di-Pietro.Ern.ea:14,Di-Pietro.Ern:15,Di-Pietro.Droniou:20}.

In the present work, we consider a method based on meshes obtained by cutting elements crossed by the interface.
No special treatment (such as the addition of ad hoc functions) is required for discretization of the diffusion operator on the elements resulting from the cut.
This is because the discretization in the bulk hinges on the $H^1$-like space of \cite{Di-Pietro.Droniou:23}, which supports general polygonal or polyhedral meshes.
As a result, our method enjoys both the assets of unfitted methods (since cuts can occur at arbitrary locations, making the approximation of the interface totally independent of the background mesh) and of fitted methods (since interface elements do not require a special treatment).
The support of general polytopal elements additionally provides an effective means to counter the possible degradation of mesh quality resulting from the cuts, since pathological elements can be merged into neighbors in the spirit of \cite{Bassi.Botti.ea:12,Antonietti.Giani.ea:13,Johansson.Larson:13}; see also \cite{Huang.Wu.ea:17} for an application of these ideas to conforming finite elements.
Interface conditions are enforced weakly through subtly defined terms that ensure consistency; see Remark~\ref{rem:comparison.dG} for further insight into this point.
The design of such terms, based on the use of trace reconstructions, is indeed one of the main contributions of the present work.
Robustness with respect to the jumps of the diffusion coefficient is obtained by using diffusion-dependent weighted averages in the spirit of \cite{Burman.Zunino:06,Di-Pietro.Ern.ea:08}.
To keep the exposition as simple as possible, we focus here on the two-dimensional case with interfaces approximated by closed polygonal chains and consider numerically the case of curved interfaces.

The rest of this work is organized as follows.
In Section~\ref{sec:continuous.setting} we state the continuous problem.
The discrete setting (mesh, polynomial spaces) is introduced in Section~\ref{sec:discrete.setting}.
In Section~\ref{sec:discrete.problem} we describe the construction leading to the discrete problem and state the scheme as well as the main results of the analysis.
A comprehensive set of numerical tests is carried out in Section~\ref{sec:numerical.tests}, while the application to the Leaky Dielectric Model makes the object of Section~\ref{sec:ldm}.
Finally, the proofs of the stability and error estimates results stated in Section~\ref{sec:discrete.problem:main.results} are provided in Section~\ref{sec:proofs}.


\section{Continuous setting}\label{sec:continuous.setting}

To keep the description of the method as simple as possible, we focus on the two-dimensional case.
We emphasize, however, that it is possible to extend the present method to three space dimensions in the spirit of \cite{Di-Pietro.Droniou:23}, as well as to curved approximations of the interface by adapting the techniques of \cite{Botti.Di-Pietro:18,Beirao-da-Veiga.Russo.ea:19,Yemm:24}.

\begin{figure}
  \centering
  \includegraphics[width=0.4\textwidth]{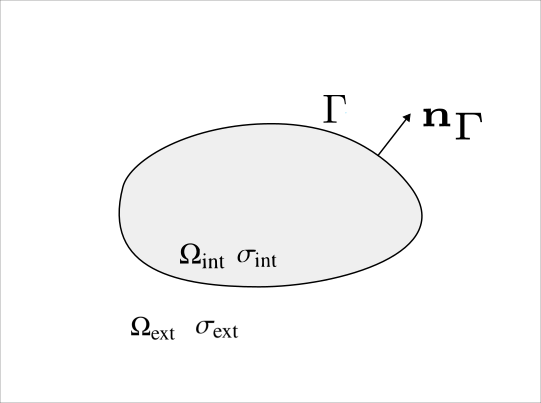}
  \caption{Configuration for the continuous problem.}\label{fig:basic_sketch}
\end{figure}

Consider an open bounded connected polygonal domain $\Omega\subset\Real^2$ with boundary $\partial \Omega$.
Let $\Gamma \subset \Omega$ be a closed non-intersecting polygonal chain such that $\Gamma \cap \partial \Omega = \emptyset$ (see Figure \ref{fig:basic_sketch}).
The domain is partitioned by $\Gamma$ into an internal and an external polygonal subdomains, respectively denoted by $\Omega_\INT$ and $\Omega_\EXT$ in  what follows.
We denote by $n$ the unit vector field normal to $\Gamma$ and pointing out of $\Omega_\INT$.

Given a couple of functions $v = (v_\INT, v_\EXT)$ with $v_\bullet : \Omega_\bullet \to \Real$ for $\bullet \in \{ \INT, \EXT \}$, each smooth enough to admit a trace on $\Gamma$, we define the following interface jump operator:
\begin{equation}\label{eq:jumpG}
  \jumpG{v} \coloneq v_\INT - v_\EXT.
\end{equation}
When applied to couples of vector-valued functions, the jump operator acts component-wise.
In what follows, whenever needed, we tacitly identify $v$ with the function $v_\INT I_{\Omega_\INT} + v_\EXT I_{\Omega_\EXT} : \Omega \setminus \Gamma \to \Real$, where $I_{\Omega_\bullet}$ is the characteristic function of $\Omega_\bullet$.

Consider a region-wise constant diffusion coefficient $\sigma : \Omega \setminus \Gamma \rightarrow\Real$ such that $\sigma_{|\Omega_\INT} \equiv \sigma_\INT > 0$ and  $\sigma_{|\Omega_\EXT} \equiv \sigma_\EXT > 0$.
Let $f:\Omega \setminus \Gamma \to \Real$, $\Phi_\Gamma: \Gamma \rightarrow \Real^2$, and $J_\Gamma: \Gamma\rightarrow \Real$ be given functions, which we assume smooth enough for the following discussion to make sense.
We consider the problem of finding the couple of scalar-valued functions $u = (u_\INT, u_\EXT)$ with $u_\bullet : \Omega_\bullet \to \Real$, $\bullet \in \{ \INT, \EXT \}$, such that
\begin{subequations}\label{eq:strong}
  \begin{alignat}{4}
    -\nabla\cdot(\sigma_\bullet \nabla u_\bullet) &= f
    &\qquad& \text{in $\Omega_\bullet$, $\bullet \in \{ \INT, \EXT \}$}, \label{eq:strong:diff_eq}
    \\
    \jumpG{u} &= J_\Gamma &\qquad& \text{on $\Gamma$}, \label{eq:strong:jump_phi}
    \\
    \jumpG{\sigma\nabla u} \cdot n_\Gamma &= \Phi_\Gamma &\qquad& \text{on $\Gamma$}, \label{eq:strong:jump_j}      
    \\
    u_\EXT &= 0 &\qquad& \text{on $\partial\Omega$}. \label{eq:strong:bnd_cond}
  \end{alignat}
\end{subequations}


\section{Discrete setting}\label{sec:discrete.setting}

\subsection{Mesh}

We discretize the domain with a polygonal mesh $(\Th, \Eh)$ in the sense of \cite[Definition~1.4]{Di-Pietro.Droniou:20}, with $\Th$ collecting the mesh elements and $\Eh$ the mesh edges.
We additionally denote by $\Vh$ the set of vertices collecting the edge endpoints.
The mesh is assumed to be fitted to the interface, i.e., there exists a subset $\EGh$ of $\Eh$ such that $\Gamma = \bigcup_{E \in \EGh} \overline{E}$.

\begin{remark}[Fitted mesh]
  It is important to emphasize that fitted polytopal meshes supported by the present method can simply be obtained cutting the elements of a background mesh along the interface, as in the numerical tests of Sections~\ref{sec:numerical.tests} and~\ref{sec:ldm}.
  This is a major advantage, particularly in the framework of moving interface problems, as it avoids a potentially expensive remeshing step.
  We also notice that the degradation of mesh quality can also be countered leveraging the support of polytopal elements: whenever an elongated or distorted element results from the cutting, it can be agglomerated into neighboring elements in the spirit of \cite{Bassi.Botti.ea:12,Antonietti.Giani.ea:13,Johansson.Larson:13} to restore mesh quality.
\end{remark}

\begin{remark}[Approximation of the interface]
  The approximation of the interface is totally independent of the background mesh.
  This is important when considering the more general case of curved interfaces, where a finer discretization can be needed.
  Notice that this can lead to polygonal elements with asymptotically small edges.
  In the spirit of \cite{Droniou.Yemm:22}, it can be shown that this kind of elements can be supported by the present method; see also \cite{Cangiani.Dong.ea:17} on this subject.
\end{remark}

For $\bullet \in \{ \INT, \EXT \}$, we let $\Th^\bullet \coloneq \left\{ T \in \Th \st T \subset \Omega_\bullet \right\}$.
For any element $T \in \Th$, we denote by $\ET$ the set collecting its edges.
Symmetrically, the set of elements sharing one edge $E \in \Eh$ is denoted by $\TE$.
For each edge $E \in \Eh$, we fix once and for all a unit normal vector $n_E$ and, for any $T \in \TE$, we denote by $\omega_{TE} \in \{ -1, 1 \}$ the relative orientation of $E$ with respect to $T$, such that $\omega_{TE} n_E$ points out of $T$.

In what follows, we assume that the mesh we are working on belongs to a regular sequence in the sense of \cite[Definition~1.9]{Di-Pietro.Droniou:20}.
Given a mesh element or edge $X \in \Th \cup \Eh$, we denote by $h_X$ its diameter, so that $h = \max_{T \in \Th} h_T$.
We will abbreviate by $a \lesssim b$ the inequality $a \le C b$ with $C$ independent of $h$, $\sigma$ and, for local inequalities, the corresponding element or edge.
Further details on the dependence of the hidden constants will be provided when appropriate.

\subsection{Local polynomial spaces}

Given $X \in \Th \cup \Eh$ and an integer $m \ge 0$, we denote by $\Poly{m}(X)$ the space spanned by the restriction of polynomials of the space variables to $X$ and by $\lproj{m}{X}$ the corresponding $L^2$-orthogonal projector.
We conventionally set $\Poly{-1}(X) \coloneq \{ 0 \}$.
For all $T \in \Th$, we will also need the space
\[
\cRoly{m}(T) \coloneq (x - x_T) \Poly{m-1}(T),
\]
where $x_T$ is a point inside $T$ at a distance from its boundary comparable to $h_T$.
It can be proved that the divergence from $\cRoly{m}(T)$ to $\Poly{m-1}(T)$ is an isomorphism; cf. \cite[Corollary~7.3]{Arnold:18}.


\section{Discrete problem}\label{sec:discrete.problem}

\subsection{Discrete space}

For any $k \ge 0$ and $\bullet \in \{ \INT, \EXT \}$, we let
\[
\underline{V}_{\bullet,h}^k \coloneq
\begin{aligned}[t]
  \Big\{
  \underline{v}_h = \big( (v_T)_{T \in \Th^\bullet}, (v_E)_{E \in \Eh^\bullet}, (v_V)_{V \in \Vh^\bullet} \big) \st
  &\text{$v_T \in \Poly{k-1}(T)$ for all $T \in \Th^\bullet$,}
  \\
  &\text{$v_E \in \Poly{k-1}(E)$ for all $E \in \Eh^\bullet$,}
  \\
  &\text{$v_V \in \Real$ for all $V \in \Vh^\bullet$}
  \Big\}.
\end{aligned}
\]
We consider the discrete space
\[
\underline{V}_h^k \coloneq \underline{V}_{\INT,h}^k \times \underline{V}_{\EXT,h}^k.
\]
as well as its subspace $\underline{V}_{h,0}^k$ with edge and vertex values vanishing on $\partial \Omega$.

The interpolator $\Ih : C^0(\overline{\Omega}_\INT) \times C^0(\overline{\Omega}_\EXT) \mapsto \underline{V}_h^k$ is such that, for all $v = (v_\INT, v_\EXT) \in C^0(\overline{\Omega}_\INT) \times C^0(\overline{\Omega}_\EXT)$,
\[
\Ih v \coloneq \big( \underline{I}_{\INT,h}^k v_\INT, \underline{I}_{\EXT,h}^k v_\EXT \big),
\]
where, for $\bullet \in \{ \INT, \EXT \}$,
\[
\underline{I}_{\bullet,h}^k v_\bullet \coloneq \big(
(\lproj{k-1}{T} v_\bullet)_{T \in \Th^\bullet},
(\lproj{k-1}{E} v_\bullet)_{E \in \Eh^\bullet},
(v_\bullet(x_V))_{V \in \Vh^\bullet}
\big).
\]

For all $T \in \Th$, we respectively denote the restrictions of $\underline{V}_h^k$, $\underline{v}_h \in \underline{V}_h^k$, and $\Ih$ to $T$ by $\underline{V}_T^k$, $\underline{v}_T \in \underline{V}_T^k$, and $\IT$.
Such restrictions are obtained collecting the polynomial components on $T$ and its boundary.
Since every mesh element is contained in one and only one subdomain, there is no ambiguity on which vertex and edge components to select when restricting to an element $T$ such that $\partial T \cap \Gamma \neq \emptyset$.

\subsection{Element gradient and potential}

For any $T \in \Th$, any $\underline{v}_T \in \underline{V}_T^k$, and any $E \in \ET$, we let the edge potential $v_{TE}$ be the unique function in $\Poly{k+1}(E)$ such that
\[
\text{%
  $v_{TE}(x_V) = v_V$ for any endpoint $V$ of $E$ and $\lproj{k-1}{E} v_{TE} = v_E$.
}
\]

\begin{remark}[Edge potential]
  Clearly, $v_{TE} \equiv 0$ whenever $E \subset \partial \Omega$ is a boundary edge and $\underline{v}_h \in \underline{V}_{h,0}^k$.
  On the other hand, when $E \subset (\partial T_1 \cap \partial T_2) \setminus \Gamma$ is an internal edge that does not lie on the interface, the value of the edge potential does not depend on the element, i.e., $v_{T_1E} = v_{T_2E}$.
\end{remark}

We define the discrete gradient $\GT : \underline{V}_T^k \to \Poly{k}(T)^2$ and potential $\pT : \underline{V}_T^k \to \Poly{k+1}(T)$ such that, for all $\underline{v}_T \in \underline{V}_T^k$,
\begin{gather} \nonumber
  \int_T \GT \underline{v}_T \cdot \tau
  = - \int_T v_T (\nabla \cdot \tau)
  + \sum_{E \in \ET} \omega_{TE} \int_E v_{TE} (\tau \cdot n_E)
  \qquad \forall \tau \in \Poly{k}(T)^2,
  \\ \label{eq:pT}
  \int_T \pT \underline{v}_T (\nabla \cdot \tau)
  = - \int_T \GT \underline{v}_T \cdot \tau
  + \sum_{E \in \ET} \omega_{TE} \int_E v_{TE} (\tau \cdot n_E)
  \qquad \forall \tau \in \cRoly{k+2}(T).
\end{gather}

\begin{remark}[Validity of \eqref{eq:pT}]\label{eq:validity.pT}
  Following \cite[Remark~17]{Di-Pietro.Droniou:23}, the relation \eqref{eq:pT} actually holds for all $\tau \in \Poly{k}(T)^2 + \cRoly{k+2}(T)$.
\end{remark}

Accounting for the previous remark, we notice that, integrating by parts the left-hand side of \eqref{eq:pT} and rearranging, we have, for all $(\underline{v}_T, \tau) \in \underline{V}_T^k \times (\Poly{k}(T)^2 + \cRoly{k+2}(T))$,
\begin{equation}\label{eq:pT.bis}
  \int_T \nabla \pT \underline{v}_T \cdot \tau
  = \int_T \GT \underline{v}_T \cdot \tau
  + \sum_{E \in \ET} \omega_{TE} \int_E (\pT \underline{v}_T - v_{TE}) (\tau \cdot n_E).
\end{equation}
Selecting $\tau = \nabla \pT \underline{v}_T$ (this is possible since $\nabla \pT \underline{v}_T \in \Poly{k}(T)^d$) in the above expression, using Cauchy--Schwarz, $(2,\infty,2)$-H\"older, and trace inequalities along with $\| n_E \|_{L^\infty(E)^2} \le 1$ in the right-hand side, and simplifying, we get
\begin{equation}\label{eq:est.grad.pT}
  \| \nabla \pT \underline{v}_T \|_{L^2(T)^2}
  \lesssim \left(
  \| \GT \underline{v}_T \|_{L^2(T)^2}^2
  + h_T^{-1} \sum_{E \in \ET} \| \pT \underline{v}_T - v_{TE} \|_{L^2(E)}^2
  \right)^{\nicefrac12}.
\end{equation}

Let $v \in H^{r+2}(T)$ for some $r \in \{0,\ldots,k\}$ and set $\widehat{\underline{v}}_T \coloneq \IT v$.
Using the techniques of \cite{Di-Pietro.Droniou:23}, where the three-dimensional case is considered, it can be proved that
\begin{gather}\label{eq:vTE:approximation}
  \| \widehat{v}_{TE} - v \|_{L^2(E)} \lesssim h_T^{r+\nicefrac32} | v |_{H^{r+2}(T)} \qquad \forall E \in \ET,
  \\ \label{eq:GT:approximation}
  \| \GT \widehat{\underline{v}}_T - \nabla v \|_{L^2(T)^2}
  + h_T^{\nicefrac12} \| \GT \widehat{\underline{v}}_T - \nabla v \|_{L^2(\partial T)^2}
  \lesssim h_T^{r+1} | v |_{H^{r+2}(T)},
  \\ \label{eq:pT:approximation}
  \| \pT \widehat{\underline{v}}_T - v \|_{L^2(T)}
  + h_T^{\nicefrac12} \| \pT \widehat{\underline{v}}_T - v \|_{L^2(\partial T)}
  \lesssim h_T^{r+2} | v |_{H^{r+2}(T)}.
\end{gather}

For future use, we also define the global discrete gradient operator $\Gh : \underline{V}_h^k \to \Poly{k}(\Th)^2$ such that, for all $\underline{v}_h \in \underline{V}_h^k$,
\[
(\Gh \underline{v}_h)_{|T} \coloneq \GT \underline{v}_T
\qquad\forall T \in \Th.
\]

\subsection{Interface trace operators}\label{sec:discrete.problem:trace.operators}

Let $E \in \EGh$ and denote by $T_\INT \in \Th^\INT$ and $T_\EXT \in \Th^\EXT$ the unique elements such that $E \subset \partial T_\INT \cap \partial T_\EXT$.
Notice that, while such elements clearly depend on $E$, we do not highlight this dependency in the notation as it will be clear from the context.
We define the edge jump $\jump{\cdot} : \underline{V}_h^k \to \Poly{k+1}(E)$ and skewed average $\savg{\cdot} : \underline{V}_h^k \to \Poly{k+1}(E)$ operators such that, for all $\underline{v}_h \in \underline{V}_h^k$,
\begin{equation}\label{eq:jump.savg}
  \jump{\underline{v}_h} \coloneq v_{T_\INT E} - v_{T_\EXT E},\qquad
  \savg{\underline{v}_h} \coloneq \lambda_\EXT v_{T_\INT E} + \lambda_\INT v_{T_\EXT E},
\end{equation}
with $\lambda_\INT$ and $\lambda_\EXT$ such that
\begin{equation}\label{eq:k.int.ext}
  \lambda_\INT \coloneq \frac{\sigma_\EXT}{\sigma_\INT + \sigma_\EXT},\qquad
  \lambda_\EXT \coloneq \frac{\sigma_\INT}{\sigma_\INT + \sigma_\EXT}.
\end{equation}

We additionally let, for any vector-valued field $\Psi$ smooth enough to admit a possibly two-valued trace on $E$,
\begin{equation}\label{eq:avg}
  \avg{\Psi} \coloneq \lambda_\INT \gamma_{T_\INT E} \Psi + \lambda_\EXT \gamma_{T_\EXT E} \Psi,
\end{equation}
where $\gamma_{T_\bullet E} \Psi$ denotes the trace of $\Psi_{|T_\bullet}$ on $E$.

\subsection{Discrete problem}

Let
\[
\sigma_T \coloneq \sigma_{|T} \qquad \forall T \in \Th.
\]
We define the bilinear form $a_h : \underline{V}_h^k \times \underline{V}_h^k \to \Real$ and the linear form $\ell_h : \underline{V}_h^k \to \Real$ such that, for all $(\underline{w}_h, \underline{v}_h) \in \underline{V}_h^k \times \underline{V}_h^k$,
\begin{equation}\label{eq:ah}
  \begin{aligned}
    a_h(\underline{w}_h, \underline{v}_h)
    &\coloneq
    \sum_{T \in \Th} \left(
    \int_T \sigma_T \GT \underline{w}_T \cdot \GT \underline{v}_T
    + \frac{\sigma_T}{h_T} \sum_{E \in \ET} \int_E (\pT \underline{w}_T - w_{TE}) (\pT \underline{v}_T - v_{TE})
    \right)
    \\
    &\quad
    - \sum_{E \in \EGh} \int_E \avg{\sigma \Gh \underline{w}_h}\cdot n_E \jump{\underline{v}_h}
    + \eta \sum_{E \in \EGh} \frac{\alpha}{h_E} \int_E \jump{\underline{w}_h} \jump{\underline{v}_h},
  \end{aligned}
\end{equation}
where
\begin{equation}\label{eq:alpha}
  \alpha \coloneq \frac{2 \sigma_\INT \sigma_\EXT}{\sigma_\INT + \sigma_\EXT}
\end{equation}
and
\begin{equation}\label{eq:lh}
  \ell_h(\underline{v}_h)
  \coloneq \sum_{T \in \Th} \int_T f \pT \underline{v}_T
  + \sum_{E \in \EGh} \int_E \Phi_\Gamma \savg{\underline{v}_h}  
  + \eta \sum_{E \in \EGh} \frac{\alpha}{h_E} \int_E J_\Gamma \jump{\underline{v}_h}.
\end{equation}
The discrete problem reads:
Find $\underline{u}_h \in \underline{V}_{h,0}^k$ such that
\begin{equation}\label{eq:discrete}
  a_h(\underline{u}_h, \underline{v}_h)
  = \ell_h(\underline{v}_h)
  \qquad \forall \underline{v}_h \in \underline{V}_{h,0}^k.
\end{equation}

\begin{remark}[Formulation of the interface terms]\label{rem:comparison.dG}
  It is worth noticing that the scheme above is not a simple extension of discontinuous Galerkin techniques to the interface problem of Section~\ref{sec:continuous.setting} with the element potential playing the role of the discontinuous solution inside trace operators.
  On the contrary, such operators use the edge potential in a subtle way, which is required for optimal order consistency; see, in particular, the passages leading to \eqref{eq:lh:reformulation} in the proof of Lemma~\ref{lem:consistency} below.
\end{remark}

\subsection{Energy norm and interface jump seminorm}

In order to state the main results of the theoretical analysis of the method, we define on $\underline{V}_h^k$ the energy norm $\| \cdot \|_{{\rm en},h}$ such that, for all $\underline{v}_h \in \underline{V}_h^k$,
\begin{equation}\label{eq:energy.norm}
  \| \underline{v}_h \|_{{\rm en},h}^2
  \coloneq
  \sum_{T \in \Th} \sigma_T \left(
  \| \GT \underline{v}_T \|_{L^2(T)^2}^2
  +  h_T^{-1} \sum_{E \in \ET} \|\pT \underline{v}_T - v_{TE} \|_{L^2(E)}^2
  \right)
  + | \underline{v}_h |_{{\rm J},h}^2,
\end{equation}
where the interface jump seminorm is such that
\begin{equation}\label{eq:seminorm.J}
  | \underline{v}_h |_{{\rm J},h}^2
  \coloneq \sum_{E \in \EGh} \frac{\alpha}{h_E} \| \jump{\underline{v}_h} \|_{L^2(E)}^2.
\end{equation}

\begin{proposition}[Energy norm]
  The map $\| \cdot \|_{{\rm en},h}$ defines a norm on $\underline{V}_{h,0}^k$.
\end{proposition}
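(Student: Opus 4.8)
The plan is to check the three defining properties of a norm on $\underline{V}_{h,0}^k$, the only delicate one being definiteness. Positivity is clear from \eqref{eq:energy.norm} since $\sigma_T > 0$ for all $T \in \Th$ (recall $\sigma_\INT, \sigma_\EXT > 0$) and $\alpha > 0$ by \eqref{eq:alpha}, so that $\| \underline{v}_h \|_{{\rm en},h}^2$ is a finite sum of squares of $L^2$-seminorms with positive weights. Viewing $\| \underline{v}_h \|_{{\rm en},h}$ as the Euclidean norm of the finite family formed by the scalars $\sigma_T^{\nicefrac12} \| \GT \underline{v}_T \|_{L^2(T)^2}$, $(\sigma_T / h_T)^{\nicefrac12} \| \pT \underline{v}_T - v_{TE} \|_{L^2(E)}$ (for $T \in \Th$, $E \in \ET$), and $(\alpha / h_E)^{\nicefrac12} \| \jump{\underline{v}_h} \|_{L^2(E)}$ (for $E \in \EGh$), absolute homogeneity and the triangle inequality follow from the linearity in $\underline{v}_h$ of $\GT$, of $\underline{v}_T \mapsto \pT \underline{v}_T - v_{TE}$ (the edge potential depends linearly on $\underline{v}_T$, being fixed by linear conditions), and of $\jump{\cdot}$, combined with the triangle inequalities for $\| {\cdot} \|_{L^2}$ and for the Euclidean norm. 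Hence $\| {\cdot} \|_{{\rm en},h}$ is a seminorm already on the whole of $\underline{V}_h^k$; what the subspace $\underline{V}_{h,0}^k$ buys us is definiteness.

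For definiteness, I would assume $\underline{v}_h \in \underline{V}_{h,0}^k$ with $\| \underline{v}_h \|_{{\rm en},h} = 0$ and show $\underline{v}_h = 0$. Since every weight is positive, each summand in \eqref{eq:energy.norm}--\eqref{eq:seminorm.J} vanishes, giving: (i) $\GT \underline{v}_T = 0$ for all $T \in \Th$; (ii) $\pT \underline{v}_T = v_{TE}$ on every $E \in \ET$ (two polynomials agreeing in $L^2(E)$); and (iii) $\jump{\underline{v}_h} = 0$ on every $E \in \EGh$. Feeding (i) and (ii) into the right-hand side of \eqref{eq:est.grad.pT} forces $\nabla \pT \underline{v}_T = 0$, i.e.\ $\pT \underline{v}_T$ equals a constant $c_T \in \Real$ on each $T$, and then (ii) gives $v_{TE} \equiv c_T$ on every $E \in \ET$. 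The constants match across edges: for an internal edge $E \in \Eh \setminus \EGh$ shared by $T_1, T_2 \in \TE$, the Remark on the edge potential gives $v_{T_1 E} = v_{T_2 E}$, hence $c_{T_1} = c_{T_2}$; for $E \in \EGh$ shared by $T_\INT$ and $T_\EXT$, definition \eqref{eq:jump.savg} together with (iii) gives $v_{T_\INT E} = v_{T_\EXT E}$, hence $c_{T_\INT} = c_{T_\EXT}$. Since $\Omega$ is connected, the elements of $\Th$ are connected through shared mesh edges, so all the $c_T$ equal a single value $c \in \Real$. Picking any edge $E \subset \partial \Omega$ with an endpoint $V$ and letting $T \in \Th$ be the unique element with $E \in \ET$, the membership $\underline{v}_h \in \underline{V}_{h,0}^k$ yields $v_V = 0$, while $c = c_T = v_{TE}(x_V) = v_V$; therefore $c = 0$.

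It then remains to read off that all components vanish. From $\pT \underline{v}_T \equiv c = 0$ and (ii) we get $v_{TE} = 0$ on every $E \in \ET$, whence, by the defining properties of the edge potential, $v_E = \lproj{k-1}{E} v_{TE} = 0$ for all $E \in \Eh$ and $v_V = v_{TE}(x_V) = 0$ for all $V \in \Vh$; and substituting $\GT \underline{v}_T = 0$ and all edge potentials zero into the definition of $\GT$ leaves $\int_T v_T (\nabla \cdot \tau) = 0$ for every $\tau \in \Poly{k}(T)^2$, so choosing $\tau \in \cRoly{k}(T) \subset \Poly{k}(T)^2$ with $\nabla \cdot \tau = v_T$ (possible since $\nabla \cdot$ maps $\cRoly{k}(T)$ isomorphically onto $\Poly{k-1}(T) \ni v_T$, cf.\ the isomorphism recalled after the definition of $\cRoly{m}(T)$) gives $\int_T v_T^2 = 0$, i.e.\ $v_T = 0$. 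Hence $\underline{v}_h = 0$. The routine part is the seminorm property; the genuine content — and the step I would treat most carefully — is the propagation argument in the second paragraph, using single-valuedness of the edge potential off $\Gamma$ together with the vanishing interface jump to show that the piecewise-constant function $T \mapsto \pT \underline{v}_T$ is globally constant, and then pinning that constant to zero via the homogeneous boundary conditions; everything afterwards is bookkeeping on the definitions of the edge potential and of $\GT$.
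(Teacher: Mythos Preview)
Your proof is correct and follows essentially the same route as the paper's: both extract from $\|\underline{v}_h\|_{{\rm en},h}=0$ the conditions $\GT\underline{v}_T=0$, $\pT\underline{v}_T=v_{TE}$, and $\jump{\underline{v}_h}=0$, use \eqref{eq:est.grad.pT} to get piecewise constancy of $\pT\underline{v}_T$, and then propagate the zero value from $\partial\Omega$ through the mesh using single-valuedness of the edge potential off $\Gamma$ and the vanishing interface jump on $\Gamma$. The only cosmetic difference is that the paper propagates in two stages (first through $\Omega_\EXT$ from $\partial\Omega$, then across $\Gamma$ into $\Omega_\INT$), whereas you argue global constancy via connectedness in one shot and then pin the constant at a boundary vertex; you are also more explicit than the paper in recovering $v_T=0$ via the divergence isomorphism $\cRoly{k}(T)\to\Poly{k-1}(T)$.
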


\begin{proof}
  $\| \cdot \|_{{\rm en},h}$ is clearly a seminorm, so we only have to prove that, for all $\underline{v}_h \in \underline{V}_{h,0}^k$, $\| \underline{v}_h \|_{{\rm en},h} = 0$ implies $\underline{v}_h = \underline{0}$.
  The condition $\| \underline{v}_h \|_{{\rm en},h} = 0$ implies:
  (i) for all $T \in \Th$, $\GT \underline{v}_T = 0$ and $\pT \underline{v}_T = v_{TE}$ for all $E \in \ET$ and
  (ii) for all $E \in \EGh$, $\jump{\underline{v}_h} = 0$.
  By \eqref{eq:est.grad.pT}, point (i) implies, in turn, that $\pT \underline{v}_T$ is constant on each $T \in \Th$.
  Since $v_{TE} = 0$ whenever $E \in \Eh$ is a boundary edge contained in $\partial \Omega$ and $T \in \Th$ is the unique mesh element to which it belongs, $\pT \underline{v}_T = 0$.
  Proceeding from $\partial \Omega$ towards the interior of $\Omega_\EXT$, this gives $\pT \underline{v}_T = 0$ for all $T \in \Th^\EXT$ and $v_{TE} = 0$ for all $E \in \ET$.
  These two conditions combined show that all element, edge, and vertex components of $\underline{v}_h$ in $\Omega_\EXT$ vanish.
  Since interface jumps vanish as well by point (ii) above, the edge and vertex components of $\underline{v}_h$ on the interface $\Gamma$ from the side of $\Omega_\INT$ are zero.
  The same reasoning as for $\Omega_\EXT$ can therefore be applied (proceeding from the interface $\Gamma$ towards the interior of $\Omega_\INT$) to show that all elements, edge, and vertex values of $\underline{v}_h$ in $\Omega_\INT$ vanish, thus concluding the proof.
\end{proof}

\subsection{Main results}\label{sec:discrete.problem:main.results}

\begin{lemma}[Stability]\label{lem:stability}
  Assume 
  \begin{equation}\label{eq:stability.condition}
    \eta > \frac{C_{\rm tr}^2 N_\partial}{4 \epsilon}
  \end{equation}
  for some real number $0 < \epsilon < 1$.
  Then, for all $\underline{v}_h \in \underline{V}_{h,0}^k$, it holds
  \begin{equation}\label{eq:stability}
    C_{\rm stab} \| \underline{v}_h \|_{{\rm en},h}^2 \le  a_h(\underline{v}_h, \underline{v}_h),
  \end{equation}
  with $C_{\rm stab} \coloneq \min \left\{ 1 - \epsilon, \eta - \frac{C_{\rm tr}^2 N_\partial}{4 \epsilon} \right\}$.
\end{lemma}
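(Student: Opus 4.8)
The plan is to test the bilinear form against the same argument $\underline{v}_h$ and to isolate the only term of $a_h(\underline{v}_h,\underline{v}_h)$ that is not manifestly nonnegative, namely the interface consistency contribution
\[
\mathcal{C}_\Gamma \coloneq \sum_{E\in\EGh}\int_E\avg{\sigma\Gh\underline{v}_h}\cdot n_E\,\jump{\underline{v}_h} .
\]
Comparing the definition \eqref{eq:ah} of $a_h$ with the definitions \eqref{eq:energy.norm}--\eqref{eq:seminorm.J} of the energy norm and of the interface jump seminorm, one has the exact identity
\[
a_h(\underline{v}_h,\underline{v}_h) = \|\underline{v}_h\|_{{\rm en},h}^2 + (\eta-1)\,|\underline{v}_h|_{{\rm J},h}^2 - \mathcal{C}_\Gamma ,
\]
so the whole argument reduces to absorbing $\mathcal{C}_\Gamma$. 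Concretely, I would first establish the bound
\[
|\mathcal{C}_\Gamma| \le \epsilon \sum_{T\in\Th}\sigma_T\|\GT\underline{v}_T\|_{L^2(T)^2}^2 + \frac{C_{\rm tr}^2 N_\partial}{4\epsilon}\,|\underline{v}_h|_{{\rm J},h}^2 ,
\]
substitute it into the identity, and expand $\|\underline{v}_h\|_{{\rm en},h}^2$ via \eqref{eq:energy.norm}--\eqref{eq:seminorm.J}; this gives
\[
a_h(\underline{v}_h,\underline{v}_h) \ge (1-\epsilon)\sum_{T\in\Th}\sigma_T\|\GT\underline{v}_T\|_{L^2(T)^2}^2 + \sum_{T\in\Th}\frac{\sigma_T}{h_T}\sum_{E\in\ET}\|\pT\underline{v}_T-v_{TE}\|_{L^2(E)}^2 + \Bigl(\eta-\frac{C_{\rm tr}^2 N_\partial}{4\epsilon}\Bigr)|\underline{v}_h|_{{\rm J},h}^2 .
\]
Since $\epsilon<1$ and assumption \eqref{eq:stability.condition} hold, the three coefficients $1-\epsilon$, $1$ and $\eta-\frac{C_{\rm tr}^2 N_\partial}{4\epsilon}$ are all $\ge C_{\rm stab}>0$, and a term-by-term comparison yields \eqref{eq:stability}.

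The estimate of $\mathcal{C}_\Gamma$ is the crux, and it is where the diffusion-weighted average \eqref{eq:avg} with the weights \eqref{eq:k.int.ext} is essential. Fix $E\in\EGh$ with neighbouring elements $T_\INT\in\Th^\INT$ and $T_\EXT\in\Th^\EXT$. Since $\sigma$ is region-wise constant and $(\Gh\underline{v}_h)_{|T_\bullet}=\GT\underline{v}_{T_\bullet}\in\Poly{k}(T_\bullet)^2$, definition \eqref{eq:avg} gives $\avg{\sigma\Gh\underline{v}_h}\cdot n_E = \lambda_\INT\sigma_\INT\,(\GT\underline{v}_{T_\INT})_{|E}\cdot n_E + \lambda_\EXT\sigma_\EXT\,(\GT\underline{v}_{T_\EXT})_{|E}\cdot n_E$. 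The decisive observation, immediate from \eqref{eq:k.int.ext} and \eqref{eq:alpha}, is that the two one-sided weights coincide, $\lambda_\INT\sigma_\INT=\lambda_\EXT\sigma_\EXT=\nicefrac{\alpha}{2}$. Combining this with Jensen's inequality (using $\lambda_\INT+\lambda_\EXT=1$) and $\|n_E\|_{L^\infty(E)^2}\le 1$, one obtains
\[
\|\avg{\sigma\Gh\underline{v}_h}\cdot n_E\|_{L^2(E)}^2 \le \frac{\alpha}{2}\Bigl(\sigma_{T_\INT}\|\GT\underline{v}_{T_\INT}\|_{L^2(E)^2}^2 + \sigma_{T_\EXT}\|\GT\underline{v}_{T_\EXT}\|_{L^2(E)^2}^2\Bigr),
\]
the right-hand norms being those of the traces on $E$. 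Next I would bound $\mathcal{C}_\Gamma$ by the Cauchy--Schwarz inequality on each $E\in\EGh$ and close with a carefully calibrated Young inequality $xy\le\tfrac{\theta_E}{2}x^2+\tfrac{1}{2\theta_E}y^2$ with $\theta_E\coloneq\tfrac{2\epsilon h_E}{C_{\rm tr}^2 N_\partial\,\alpha}$, chosen so that the jump factor inherits the weight $\tfrac{\alpha}{h_E}$: then $\tfrac{1}{2\theta_E}\|\jump{\underline{v}_h}\|_{L^2(E)}^2=\tfrac{C_{\rm tr}^2 N_\partial}{4\epsilon}\tfrac{\alpha}{h_E}\|\jump{\underline{v}_h}\|_{L^2(E)}^2$, while $\tfrac{\theta_E}{2}\|\avg{\sigma\Gh\underline{v}_h}\cdot n_E\|_{L^2(E)}^2$, after inserting the bound above, the discrete trace inequality $\|(\GT\underline{v}_T)_{|E}\|_{L^2(E)^2}\le C_{\rm tr}h_T^{-1/2}\|\GT\underline{v}_T\|_{L^2(T)^2}$ (applied componentwise), and $h_E\le h_{T_\bullet}$, is at most $\tfrac{\epsilon}{2N_\partial}\bigl(\sigma_{T_\INT}\|\GT\underline{v}_{T_\INT}\|_{L^2(T_\INT)^2}^2+\sigma_{T_\EXT}\|\GT\underline{v}_{T_\EXT}\|_{L^2(T_\EXT)^2}^2\bigr)$. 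Summing over $E\in\EGh$, using that each mesh element carries at most $N_\partial$ edges (hence at most $N_\partial$ interface edges), so that the sum over $\EGh$ of the bracketed quantity is $\le N_\partial\sum_{T\in\Th}\sigma_T\|\GT\underline{v}_T\|_{L^2(T)^2}^2$, and recognizing $\sum_{E\in\EGh}\tfrac{\alpha}{h_E}\|\jump{\underline{v}_h}\|_{L^2(E)}^2=|\underline{v}_h|_{{\rm J},h}^2$, gives the announced bound on $|\mathcal{C}_\Gamma|$ — in fact with $\nicefrac{\epsilon}{2}$ in place of $\epsilon$, a harmless slack.

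The step I expect to be the main obstacle is precisely this estimate of $\mathcal{C}_\Gamma$. It has to be carried out with constants independent of $\sigma_\INT$, $\sigma_\EXT$ and $h$, which is exactly what forces the use of the $\sigma$-weighted average \eqref{eq:avg} with the specific weights \eqref{eq:k.int.ext}: these balance the two one-sided contributions, producing the common factor $\nicefrac{\alpha}{2}$ (rather than $\sigma_\INT$ or $\sigma_\EXT$ separately) and hence, after the trace inequality, weights $\sigma_{T_\bullet}$ matching those of the energy norm. It also dictates the calibration of $\theta_E$ so that the two constants that emerge are exactly $\epsilon$ and $\nicefrac{C_{\rm tr}^2 N_\partial}{4\epsilon}$, i.e.\ the very quantities appearing in $C_{\rm stab}$ and in the stability threshold \eqref{eq:stability.condition}. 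The algebraic identity for $a_h(\underline{v}_h,\underline{v}_h)$ and the final term-by-term comparison, by contrast, are routine.
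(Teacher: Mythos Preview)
Your proposal is correct and follows essentially the same route as the paper: isolate the interface consistency term, bound it via the identity $\lambda_\bullet\sigma_\bullet=\alpha/2$, the discrete trace inequality, the $N_\partial$ counting argument, and Young's inequality, then plug back into $a_h(\underline v_h,\underline v_h)$. The only cosmetic differences are that you invoke Jensen's inequality to bound $\|\avg{\sigma\Gh\underline v_h}\cdot n_E\|_{L^2(E)}^2$ directly (the paper uses a triangle inequality together with $\alpha^{-1/2}\lambda_\bullet\sigma_\bullet^{1/2}<1/\sqrt2$ and then $(a+b)^2\le 2(a^2+b^2)$), and you apply Young edge-by-edge with a calibrated $\theta_E$ rather than after summing; this yields the harmless sharpening $\epsilon/2$ you note.
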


\begin{proof}
  See Section~\ref{sec:proofs:stability}
\end{proof}

\begin{theorem}[Error estimate]\label{thm:error.estimate}
  Let $u$ be the weak solution to \eqref{eq:strong} and $\underline{u}_h$ solve \eqref{eq:discrete}.
  Under assumption \eqref{eq:stability.condition}, and further assuming that $u \in {\big[ C^0(\overline{\Omega}_\INT) \cap H^{r+2}(\Th^\INT) \big]} \times {\big[ C^0(\overline{\Omega}_\EXT) \cap H^{r+2}(\Th^\EXT) \big]}$ for some $r \in \{0,\ldots,k\}$, it holds
  \[
  \| \underline{u}_h - \Ih u \|_{{\rm en},h} \lesssim \overline{\sigma}^{\nicefrac12} h^{r+1} | u |_{H^{r+2}(\Th)},
  \]
  where $| \cdot |_{H^{r+2}(\Th)}$ is the broken $H^{r+2}$-seminorm on the mesh $\Th$ and
  the hidden constant depends only on the domain, the stability constant $C_{\rm stab}$ in \eqref{eq:stability}, the polynomial degree $k$, and the mesh regularity parameter (but is independent of both the meshsize and $\sigma$).
\end{theorem}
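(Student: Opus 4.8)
The plan is to follow the classical Strang-type recipe for nonconforming/Nitsche methods: combine the coercivity of $a_h$ from Lemma~\ref{lem:stability} with a consistency (Galerkin orthogonality defect) estimate for the interpolate $\Ih u$. Concretely, set $\underline{e}_h \coloneq \underline{u}_h - \Ih u \in \underline{V}_{h,0}^k$. By \eqref{eq:stability} we have $C_{\rm stab} \| \underline{e}_h \|_{{\rm en},h}^2 \le a_h(\underline{e}_h, \underline{e}_h) = a_h(\underline{u}_h, \underline{e}_h) - a_h(\Ih u, \underline{e}_h) = \ell_h(\underline{e}_h) - a_h(\Ih u, \underline{e}_h)$, where the discrete equation \eqref{eq:discrete} was used. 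So the whole estimate reduces to bounding the consistency error functional $\mathcal{E}_h(\underline{v}_h) \coloneq \ell_h(\underline{v}_h) - a_h(\Ih u, \underline{v}_h)$ by $\overline{\sigma}^{\nicefrac12} h^{r+1} |u|_{H^{r+2}(\Th)} \, \| \underline{v}_h \|_{{\rm en},h}$, after which dividing through by $\| \underline{e}_h \|_{{\rm en},h}$ finishes the argument. This is exactly the content that Lemma~\ref{lem:consistency} (referenced in Remark~\ref{rem:comparison.dG}) is presumably designed to supply, so in practice the theorem is a two-line corollary of stability plus consistency; the real work is the consistency lemma.

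For the consistency estimate itself, the strategy is to insert the continuous PDE \eqref{eq:strong} into the definition of $a_h(\Ih u, \cdot)$ and cancel against $\ell_h$. First I would integrate by parts elementwise in the term $\sum_T \int_T \sigma_T \GT \widehat{\underline{u}}_T \cdot \GT \underline{v}_T$, using \eqref{eq:pT.bis} and the approximation properties \eqref{eq:GT:approximation}, \eqref{eq:pT:approximation}, \eqref{eq:vTE:approximation} to replace $\GT \widehat{\underline{u}}_T$ and $\pT \widehat{\underline{u}}_T$ by $\nabla u$ and $u$ up to $O(h_T^{r+1})$ and $O(h_T^{r+2})$ remainders respectively; on interior non-interface edges the single-valuedness of $v_{TE}$ and of $u$ makes the edge contributions telescope/cancel, while on $\partial\Omega$ the components of $\underline{v}_h$ vanish. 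This produces $\sum_T \int_T f\, \pT\underline{v}_T$ (matching the first term of $\ell_h$ up to consistency remainders) plus a collection of interface terms on $\EGh$ involving $\avg{\sigma\nabla u}\cdot n_E$ and $\jump{\underline{v}_h}$, together with $\savg{\underline{v}_h}$-weighted jumps of $\sigma\nabla u$. Here one substitutes $\jumpG{\sigma\nabla u}\cdot n = \Phi_\Gamma$ and $\jumpG{u} = J_\Gamma$ from \eqref{eq:strong:jump_j}, \eqref{eq:strong:jump_phi}; the specific choice of the weights $\lambda_\INT, \lambda_\EXT$ in \eqref{eq:k.int.ext} and of $\alpha$ in \eqref{eq:alpha} is exactly what makes the algebraic identity $\lambda_\INT + \lambda_\EXT = 1$, $\lambda_\bullet \sigma_\bullet = \tfrac{\alpha}{2}$ close so that the exact solution reproduces the right-hand side. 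What remains is a sum of consistency remainders, each of which is controlled by Cauchy--Schwarz, discrete trace inequalities, and the bounds \eqref{eq:GT:approximation}--\eqref{eq:pT:approximation}, then absorbed into the appropriate piece of $\| \underline{v}_h \|_{{\rm en},h}$ — the bulk remainders into the $\sigma_T^{\nicefrac12}\|\GT\underline{v}_T\|$ and $\sigma_T^{\nicefrac12} h_T^{-\nicefrac12}\|\pT\underline{v}_T - v_{TE}\|$ parts, and the interface remainders into $|\underline{v}_h|_{{\rm J},h}$ after pulling out a factor $h_E^{\nicefrac12}\alpha^{-\nicefrac12}$.

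The main obstacle, and the point emphasized in Remark~\ref{rem:comparison.dG}, is the consistency term $\sum_{E\in\EGh}\int_E \avg{\sigma\Gh\widehat{\underline{u}}_h}\cdot n_E\,\jump{\underml{v}_h}$: one must show that replacing the interpolated gradient $\Gh\widehat{\underline{u}}_h$ by the exact $\sigma\nabla u$ costs only $O(h^{r+1})$ in the energy-dual norm, which requires the edge (trace) approximation bound $h_T^{\nicefrac12}\|\GT\widehat{\underline{u}}_T - \nabla u\|_{L^2(\partial T)^2} \lesssim h_T^{r+1}|u|_{H^{r+2}(T)}$ from \eqref{eq:GT:approximation}, and a careful bookkeeping of which edge potential ($v_{T_\INT E}$ vs.\ $v_{T_\EXT E}$, hence which element's trace) pairs with which weight — getting the algebra in \eqref{eq:jump.savg}--\eqref{eq:avg} to line up is what the "subtle" use of edge potentials refers to, and is where the proof of \eqref{eq:lh:reformulation} does its work. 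Once that reformulation is in hand the $\sigma$-uniformity is automatic, since every surviving constant is either $1$, $\lambda_\bullet \le 1$, or absorbed into the $\alpha/h_E$ weight already present in both $a_h$ and the norm, so that only the harmless prefactor $\overline{\sigma}^{\nicefrac12}$ (coming from the $\sigma_T$ in the bulk part of the norm) survives. The remaining steps — discrete trace and inverse inequalities, Cauchy--Schwarz, and summation over the mesh using regularity — are routine and I would not expand them.
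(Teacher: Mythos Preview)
Your proposal is correct and follows essentially the same route as the paper: the theorem is obtained as a direct consequence of the stability Lemma~\ref{lem:stability} and the consistency estimate Lemma~\ref{lem:consistency} via the Third Strang Lemma, and your sketch of the consistency proof (reformulate $\ell_h$ by inserting the PDE and integrating by parts, reformulate $a_h(\Ih u,\cdot)$ via \eqref{eq:pT.bis}, subtract, then bound the five resulting remainder terms using \eqref{eq:vTE:approximation}--\eqref{eq:pT:approximation} and the weight identities $\lambda_\bullet\sigma_\bullet = \alpha/2$) matches the paper's argument in Section~\ref{sec:proofs:error.estimate} closely.
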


\begin{proof}
  See Section~\ref{sec:proofs:error.estimate}.
\end{proof}

\begin{remark}[Robustness in $\sigma$]\label{rem:sigma:robustness}
  Notice that the right-hand side of the above estimate does not depend on the ratio $\frac{\sigma_\INT}{\sigma_\EXT}$, making it robust in the case of media with highly contrasting properties.
  Crucial to obtain this robustness property is the use of weighted averages in the spirit of \cite{Burman.Zunino:06,Di-Pietro.Ern.ea:08}.
\end{remark}


\section{Numerical tests}\label{sec:numerical.tests}

To numerically assess the theoretical results of Section~\ref{sec:discrete.problem:main.results}, 
we have implemented in Python the lowest-order version of the scheme corresponding to $k = 0$.

\subsection{Square interface}\label{sec:numerical.tests:square}

We consider the square domain $\Omega = \left(-\nicefrac12, \nicefrac12\right)^2$, with a square interface
\[
\Gamma = \left\{(x,y) \in \left[-\nicefrac14,\nicefrac14\right]^2 \;:\; \text{$|x| = \nicefrac14$ or $|y| = \nicefrac14$} \right\}.
\]
Since the interface is a polygonal chain, no geometric error is introduced.
We consider the following family of solutions $u = (u_\INT, u_\EXT)$ parametrized by the ratio $\frac{\sigma_\EXT}{\sigma_\INT}$ and depicted in Figure~\ref{fig:square.test:exact.solution}:
\begin{equation}\label{eq:square.test:exact.solution}
  u_\INT = \frac{\sigma_\EXT}{\sigma_\INT} (x^2-y^2),\qquad
  u_\EXT = x^2 - y^2
\end{equation}
with forcing term, (non-homogeneous) boundary conditions, and values for $J_\Gamma$  and $\Phi_\Gamma$ inferred from the expression of $u$.

\begin{figure}
  \centering
  \includegraphics[width=\textwidth]{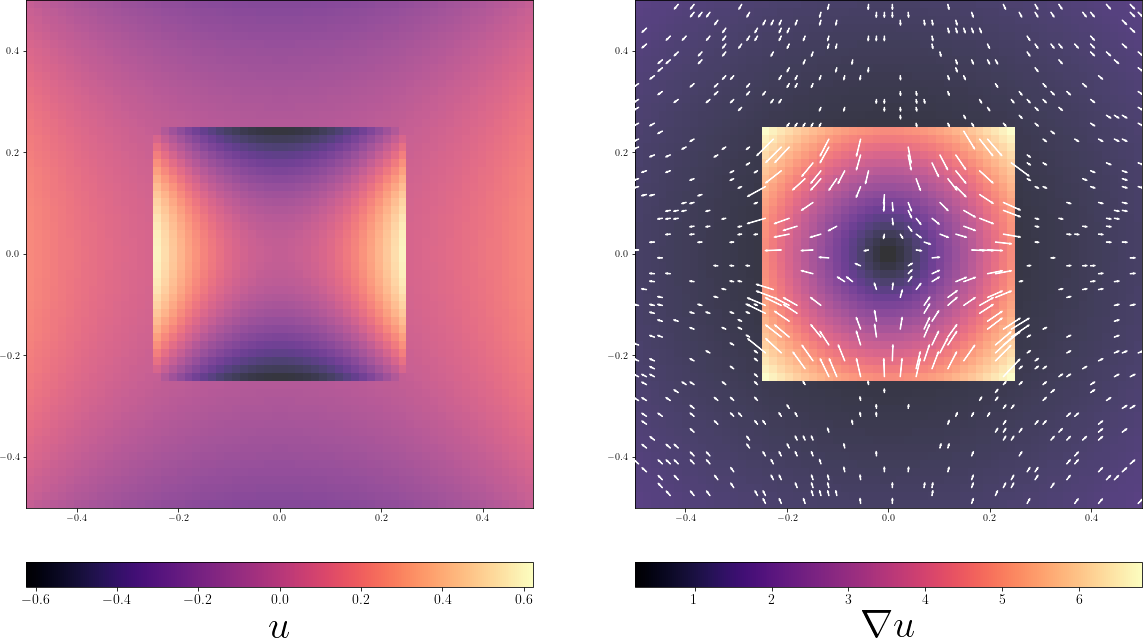} 
  \caption{The exact solution \eqref{eq:square.test:exact.solution} considered in Section~\ref{sec:numerical.tests:square} and its gradient for $\frac{\sigma_\INT}{\sigma_\EXT} = 10^{-1}$}
  \label{fig:square.test:exact.solution}
\end{figure}

We consider two mesh families, both compliant with the interface.
The first sequence is composed of Cartesian orthogonal meshes.
The second sequence is obtained from the latter by randomly moving vertices that are not located on the interface within a circle of radius $\frac{\ell}5$, with $\ell$ denoting the measure of the sides of the element in the non-deformed mesh; see Figure~\ref{fig:numerical.tests:square:meshes}.

\begin{figure}\centering
  \includegraphics[width=0.40\textwidth]{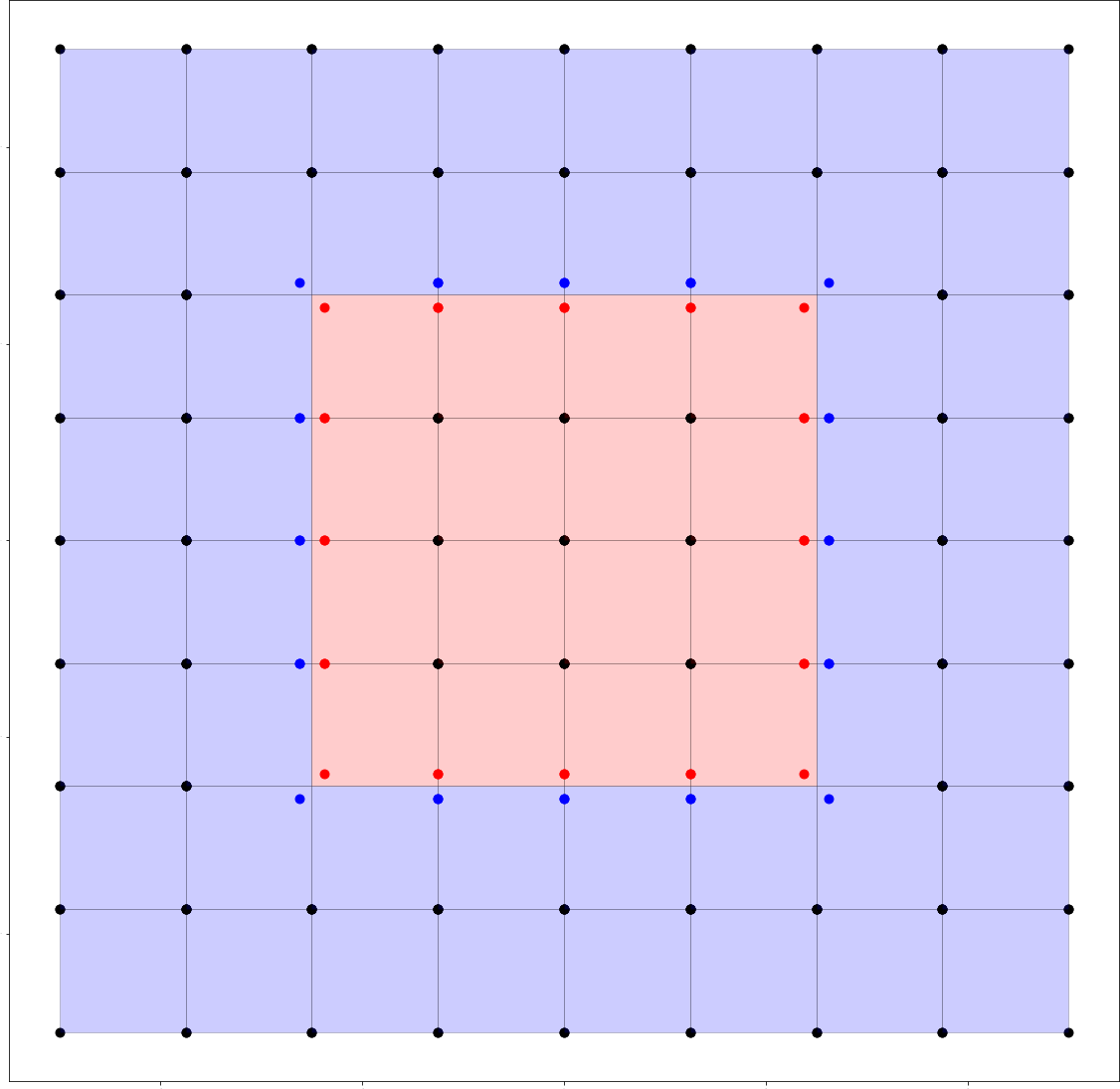}
  \includegraphics[width=0.40\textwidth]{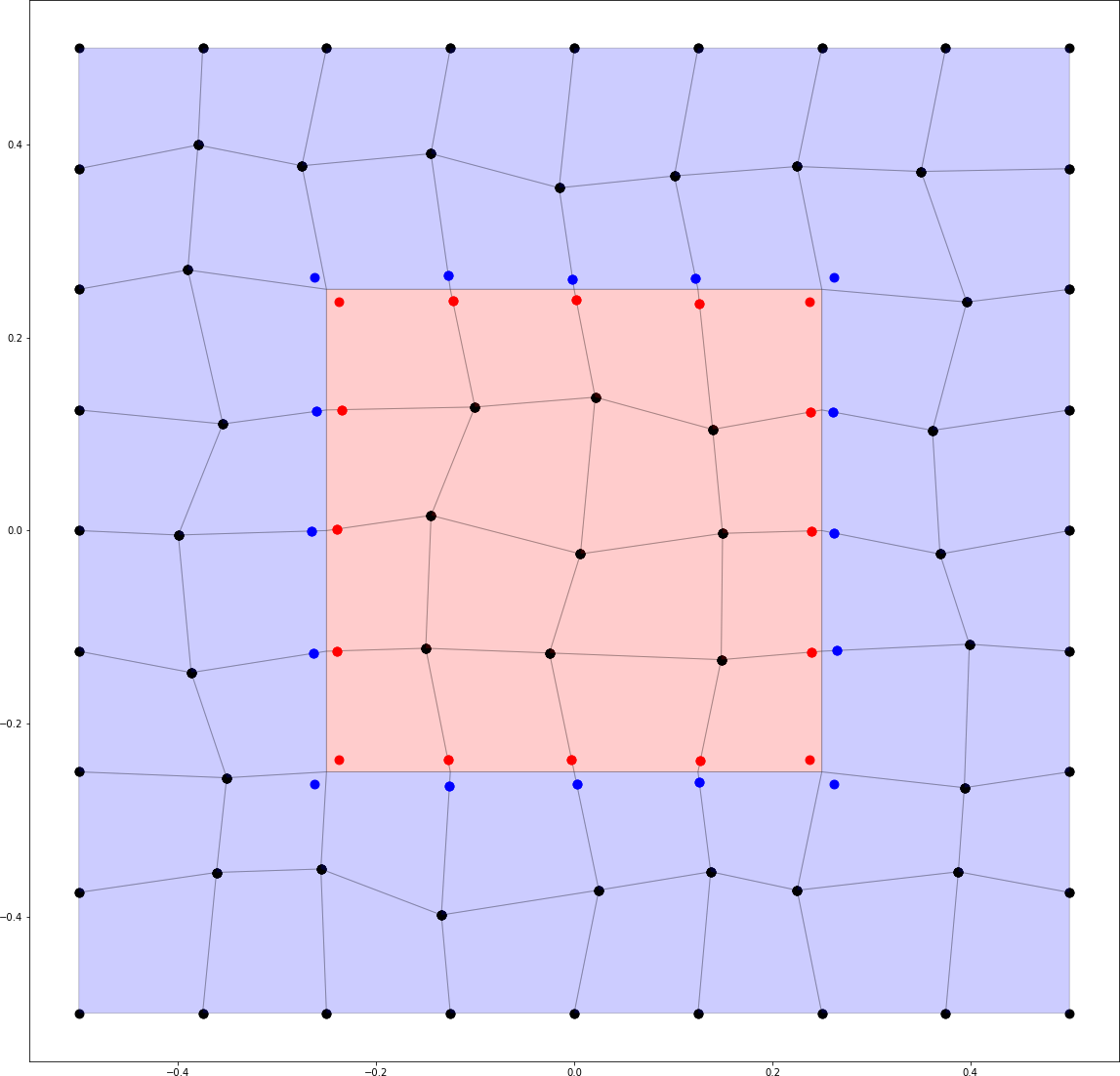}
  \caption{Mesh sequences considered in the numerical test of Section~\ref{sec:numerical.tests:square}.\label{fig:numerical.tests:square:meshes}}
\end{figure}

In order to assess the robustness of the method with respect to the ratio $\frac{\sigma_\INT}{\sigma_\EXT}$,
we let this quantity vary in $\{10^{-6}, 10^{-3}, 10^3, 10^6\}$.
We monitor two measures of the error: the energy norm defined by \eqref{eq:energy.norm} and the component $L^2$-norm $\| \cdot \|_{0,h}$ defined by \cite[Eq.~(4.20)]{Di-Pietro.Droniou:21}, i.e.,
\[
\| \underline{v}_h \|_{0,h}
\coloneq \left(
\sum_{T \in \Th} \| v_T \|_{L^2(T)}^2
+ h_T \sum_{E \in \ET}  \| v_E \|_{L^2(E)}^2
\right)^{\frac12}
\qquad \forall \underline{v}_h \in \underline{V}_h^k.
\]
In all the cases, the error is normalized with respect to the corresponding norm of the discrete solution.

\begin{figure}\centering
  \includegraphics[width=\textwidth]{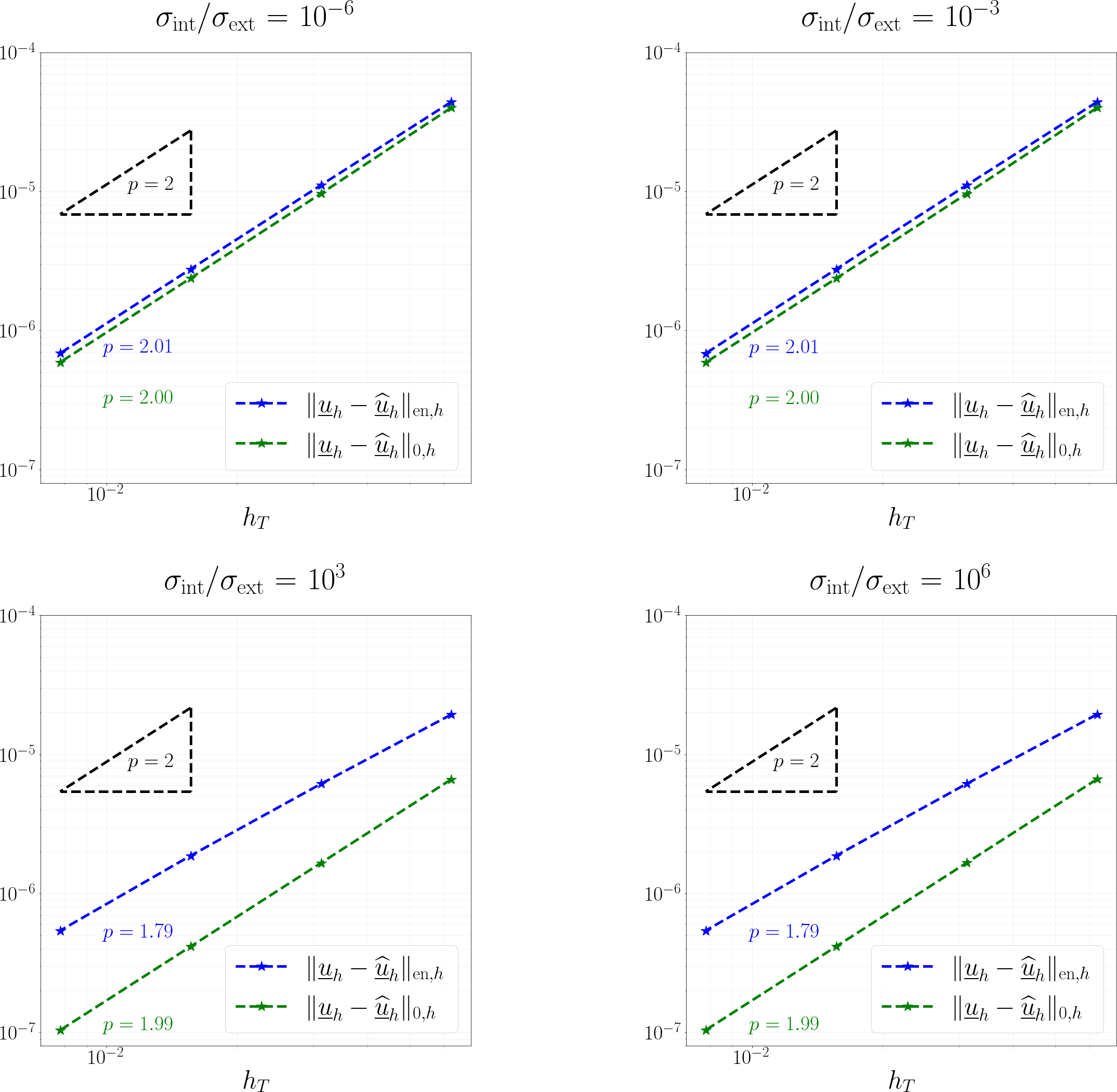}
  \caption{Convergence for different values of $\frac{\sigma_\INT}{\sigma_\EXT}$ over a mesh sequence of Cartesian orthogonal meshes,
    as described in Section~\ref{sec:numerical.tests:square}. 
    Error is normalized with respect to the norm of the reference solution.
    \label{fig:numerical.tests:square:convergence.cartesian}}
\end{figure}

\begin{figure}
  \centering
  \includegraphics[width=\textwidth]{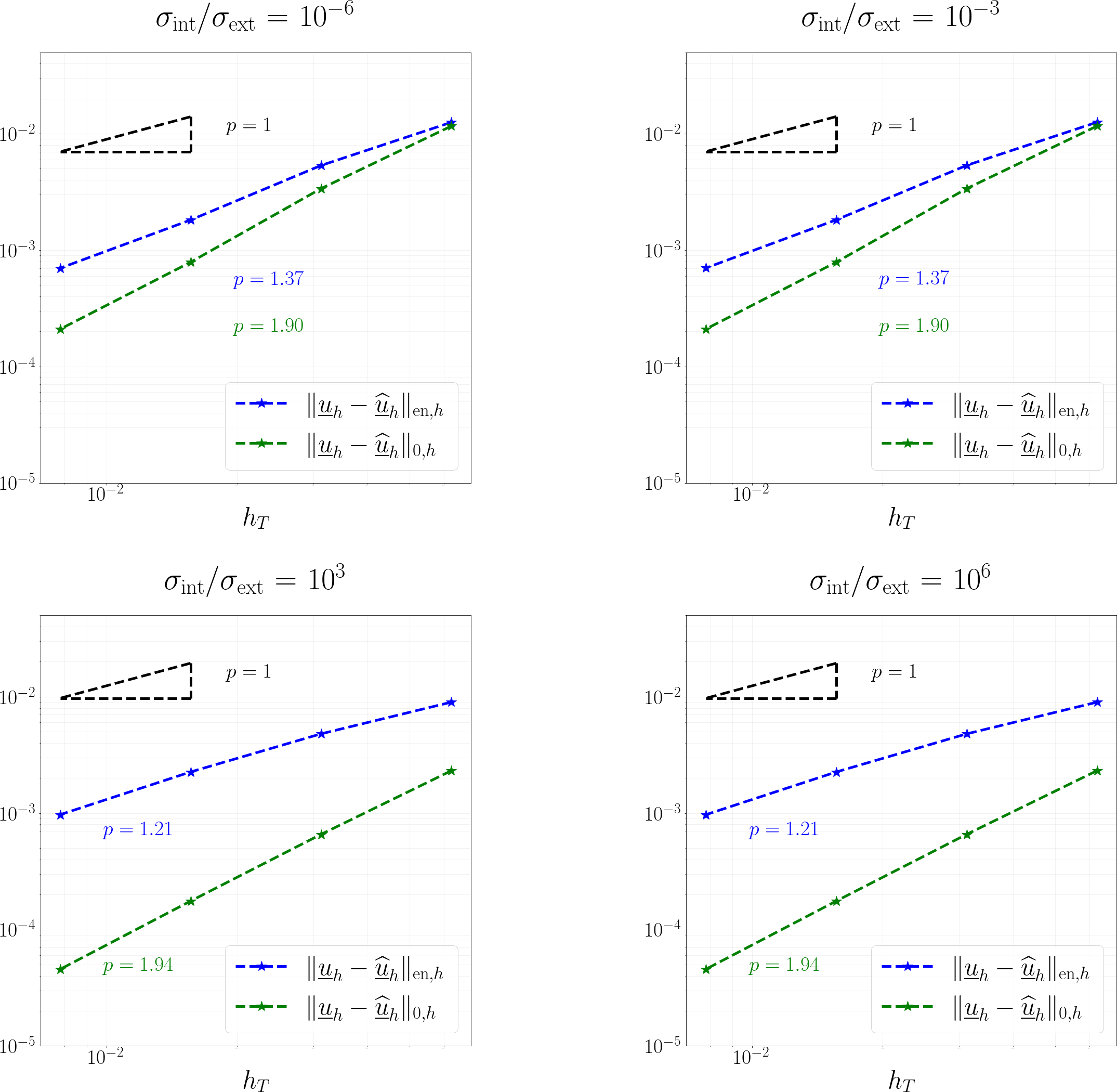}
  \caption{ Convergence for different values of $\frac{\sigma_\INT}{\sigma_\EXT}$ over a 
    mesh sequence of irregular quadrilaterals as described in Section~\ref{sec:numerical.tests:square}.
    Error is normalized with respect to the norm of the reference solution
    \label{fig:numerical.tests:square:convergence.crooked}}
\end{figure}

The results reported in Figure ~\ref{fig:numerical.tests:square:convergence.cartesian} 
and~\ref{fig:numerical.tests:square:convergence.crooked} show that the energy norm converges with order 1 (or slightly more),
as predicted by Theorem~\ref{thm:error.estimate} with $r = 0$.
We additionally notice that the error is of comparable magnitude irrespectively of the value of $\frac{\sigma_\INT}{\sigma_\EXT}$, which confirms the robustness of the method with respect to the jumps of the diffusion coefficients discussed in Remark~\ref{rem:sigma:robustness}.
As for the error in the $L^2$-like norm, convergence is close to second order, but its magnitude varies significantly with the ratio $\frac {\sigma_\INT}{\sigma_\EXT}$.
This is to be expected, since the norm $\| \cdot \|_{0,h}$ does not incorporate any dependence on the value of $\sigma$.

\subsection{Circular interface}\label{sec:numerical.tests:circle}

The second test introduces an additional difficulty, namely the fact that we deal with a curved interface.
More specifically, in the square domain  $\Omega = [-\nicefrac12, \nicefrac12]^2$, we consider the circular interface $\Gamma = \{(x,y): x^2+ y^2 = R^2 \}$ with $R=\nicefrac14$. 
The convergence of the method is tested 
considering the following family of solutions 
$u= (u_\INT, u_\EXT)$, represented in Figure ~\ref{fig:circle_intf_solution.png}:

\begin{equation}\label{eq:solution_circle}
  u_\INT = \frac{2\sigma_\EXT}{\sigma_\EXT+\sigma_\EXT}x 
  \qquad
  u_\EXT = 1 + \left[
    1 + \left(
    \frac{\sigma_\EXT-\sigma_\INT}{\sigma_\EXT+\sigma_\INT}
    \right)
    \frac{R^2}{x^2 +y^2}
    \right] x.
\end{equation}

\begin{figure}
  \centering
  \includegraphics[width=\textwidth]{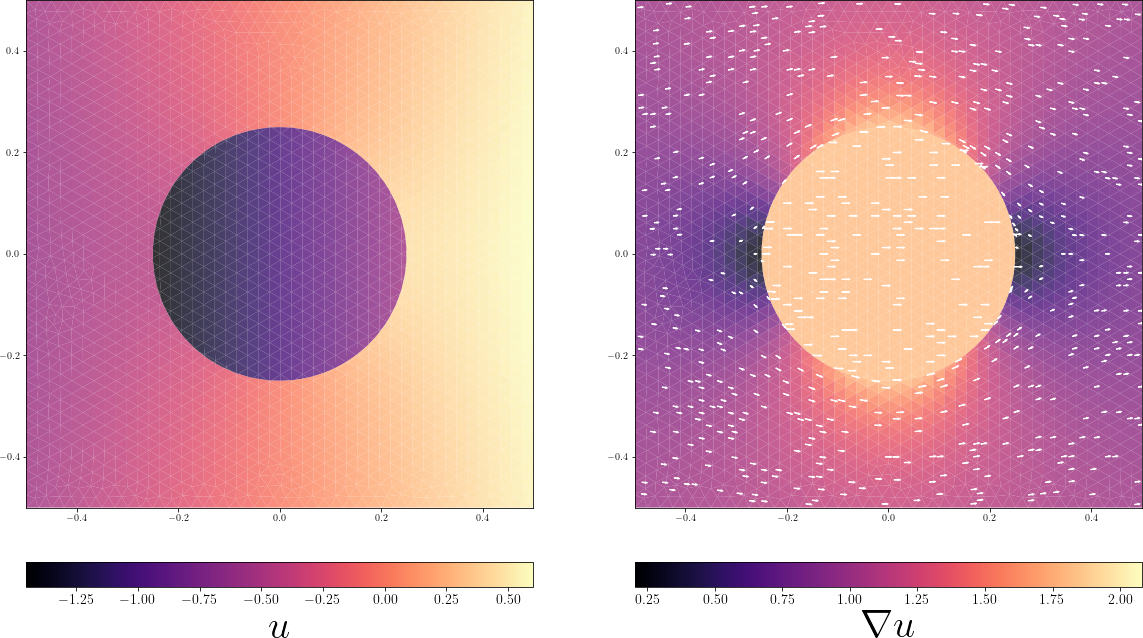} 
  \caption{ 
    The exact solution \ref{eq:solution_circle} considered in Section~\ref{sec:numerical.tests:circle} and its 
    gradient with $\frac{\sigma_\INT}{\sigma_\EXT} = 10^{-1}$.
  }
  \label{fig:circle_intf_solution.png}
\end{figure}

We consider a sequence of unstructured triangular meshes of $\Omega$ with mesh size halved at each refinement step and a family of polygonal discretizations of $\Gamma$ with segment length divided by $2^M$ at each refinement step (the integer $M$ therefore represents the refinement ratio of the interface with respect to the background mesh).
A fitted mesh is generated by splitting the elements of the original triangular mesh cut by the interface, as represented in Figure \ref{fig:circle_mesh_dofs}. 
The test is then repeated with $M=4$ for different values of $\sigma_\INT/\sigma_\EXT$ taken in  $\{10^{-6}, 10^{-3}, 10^3, 10^6\}$ to assess the convergence and robustness properties of the method; see Figure ~\ref{fig:ratio_sequence_convergence_circle}.
As for the test of Section ~\ref{sec:numerical.tests:square}, slightly more than the theoretical convergence rate $1$ is obtained for the energy norm.
In order to explore the impact of the refinement ratio, in Figure ~\ref{fig:refinement_speed_comparison} we let $\sigma_\INT/\sigma_\EXT = 10^{-1}$ and solve for several values of $M$.
The results suggest that $M=2$ is sufficient to get the theoretical convergence rate $1$, showing the ability of the method to capture curved interfaces without increasing the number of interface edges.

\begin{figure}
  \centering
  \begin{subfigure}[b]{0.45\textwidth}
    \centering
    \includegraphics[width=\textwidth]{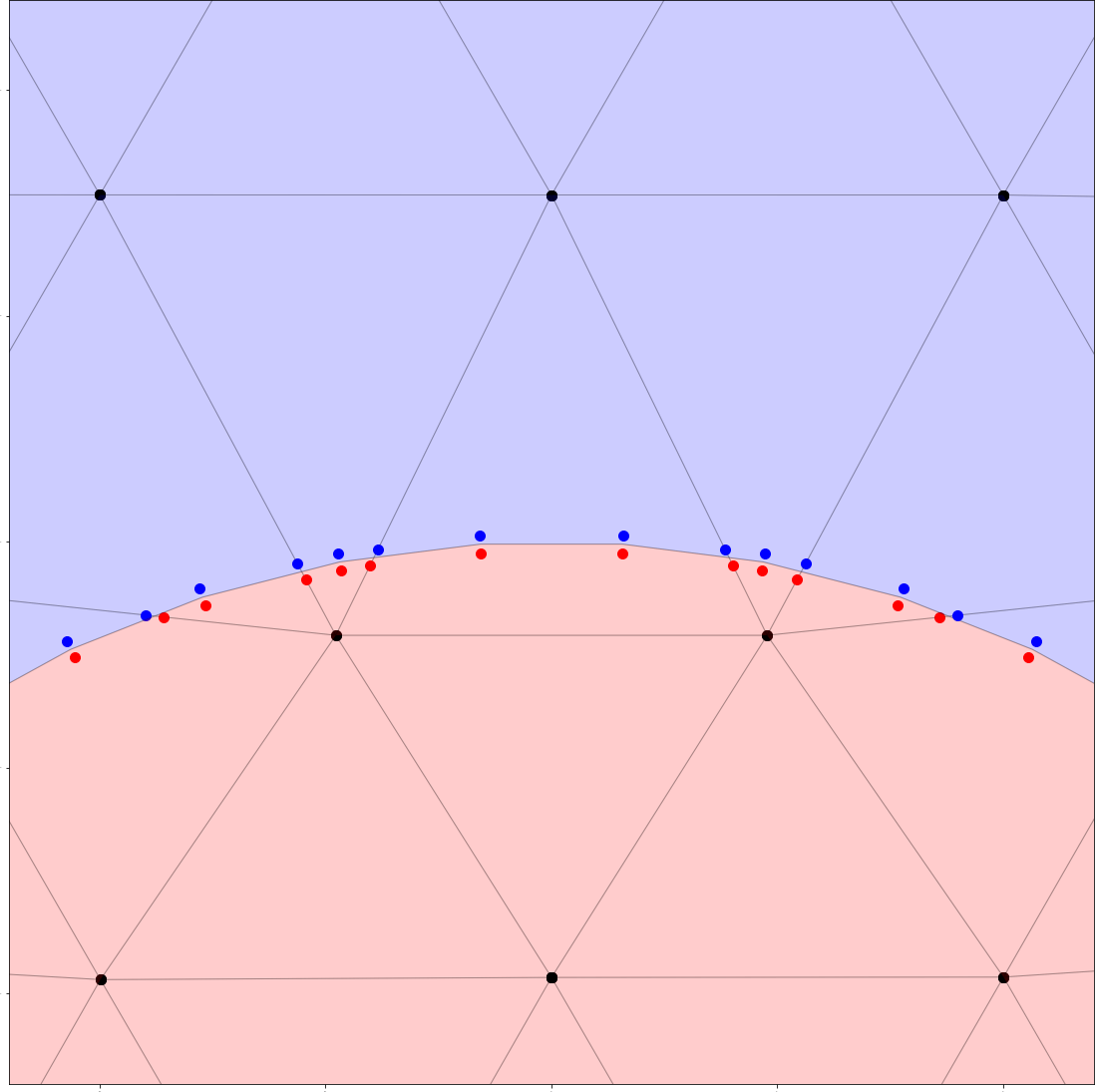}
  \end{subfigure}
  \hfill
  \begin{subfigure}[b]{0.45\textwidth}
    \centering
    \includegraphics[width=\textwidth]{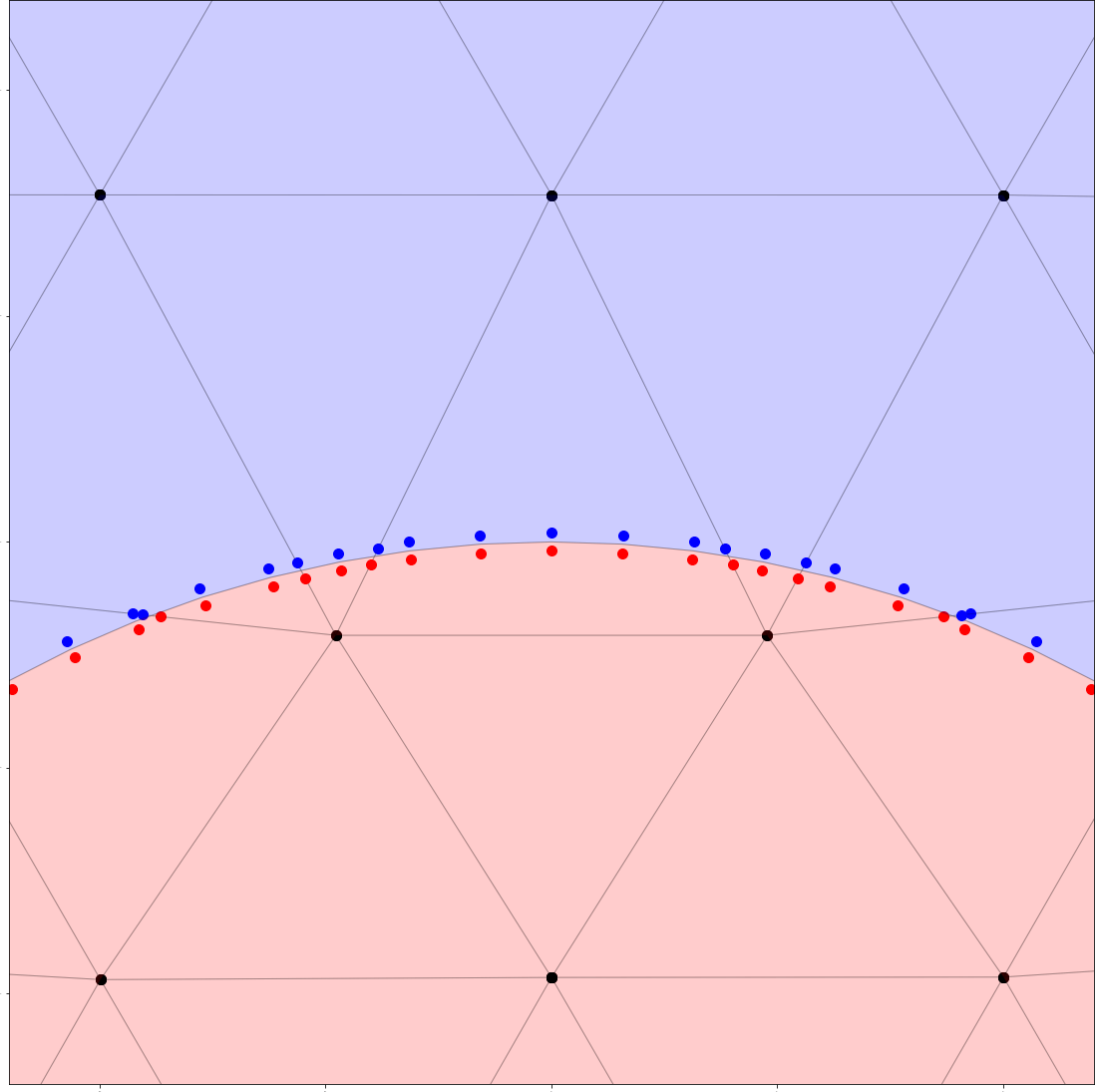}
  \end{subfigure}

  \caption{Mesh family used in the example form Section ~\ref{sec:numerical.tests:circle}.
    The detail shows new elements generated by cutting a triangular 
    mesh with a polygonal discretization of the interface. 
    Spots represent the distribution of degrees of freedom. The 
    same background triangular mesh can be cut using different refinement 
    levels for the interface.
    The accuracy of discretization of $\Gamma$ is therefore arbitrary.}
  \label{fig:circle_mesh_dofs}
\end{figure}

\begin{figure}
  \centering
  \includegraphics[width=0.45\textwidth]{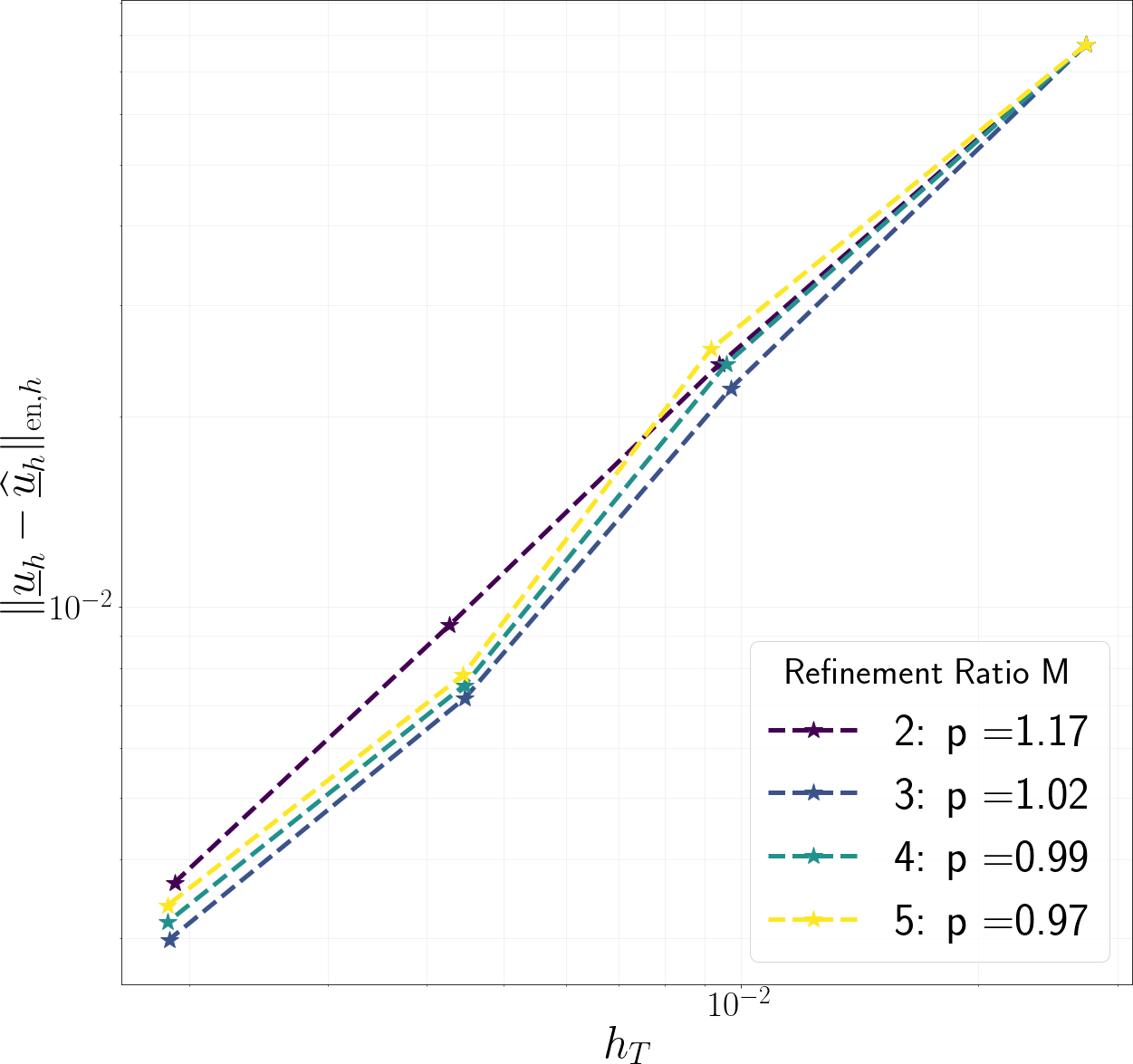}
  \caption {Convergence test described in Section~\ref{sec:numerical.tests:circle}, 
    keeping $\sigma_\INT/\sigma_\EXT = 0.1$ and varying the refinement ratio $M$.}
  \label{fig:refinement_speed_comparison}
\end{figure}

\begin{figure}
  \centering
  \includegraphics[width=\textwidth]{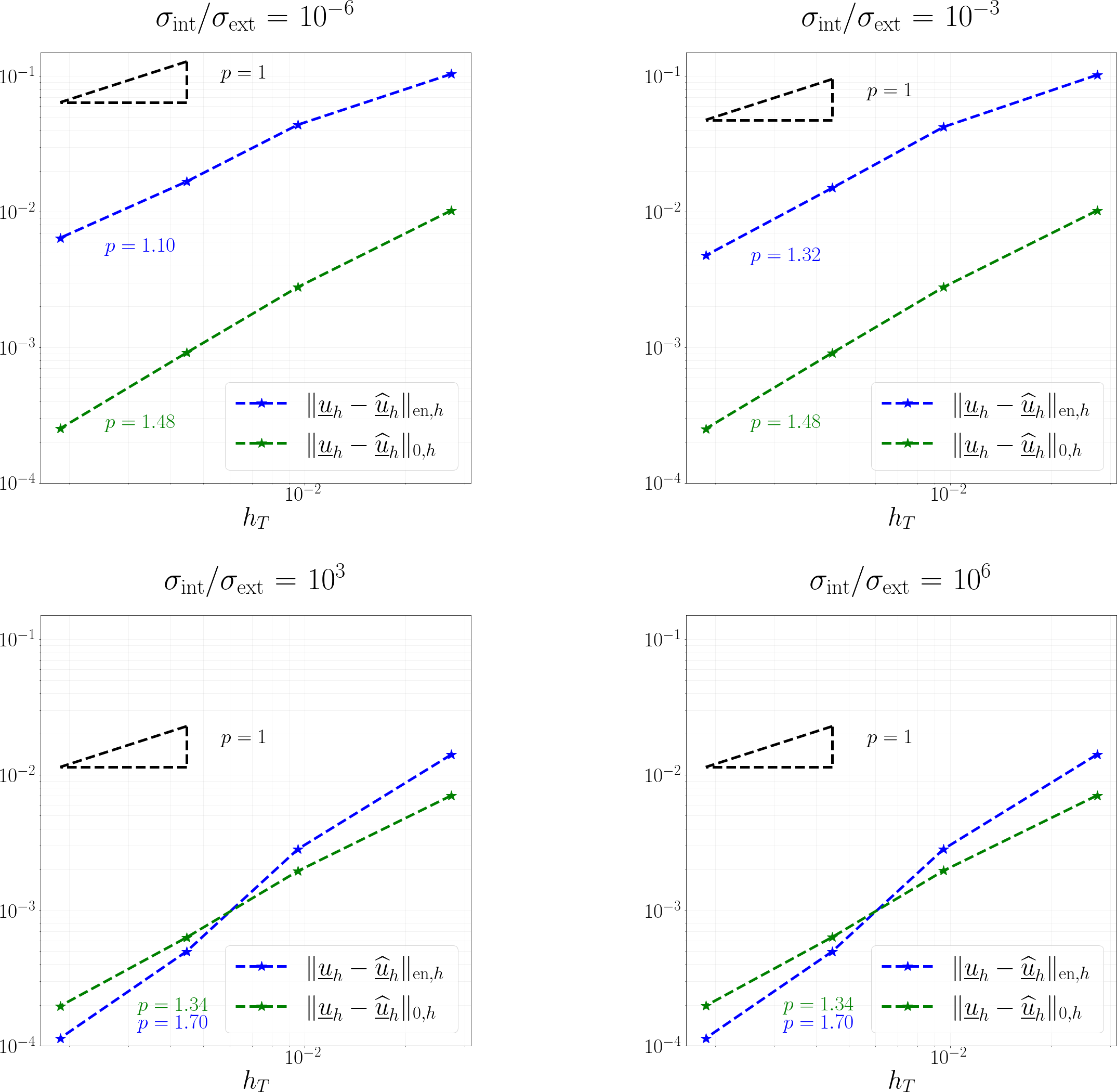}
  \caption
      {Convergence test from Section~\ref{sec:numerical.tests:circle}. 
        With refinement ratio $M=4$ the test is repeated for several 
        values of $\frac{\sigma_\INT}{\sigma_\EXT}$. Error is normalized 
        with respect to the norm of the reference solution.}
      \label{fig:ratio_sequence_convergence_circle}
\end{figure}

\subsection{Generic interface}\label{sec:numerical.tests:generic}

In the square domain  
$\Omega = [-\nicefrac12, \nicefrac12]^2$, we consider a last test where the interface is obtained by deforming a circle.
The additional difficulty comes from the fact that the curvature is no longer constant.
To test the convergence of the method, we consider the family of polynomial solution ~\ref{eq:square.test:exact.solution} used for the case of a square interface depicted in Figure ~\ref{fig:generic_intf_solution}.
Keeping the refinement ratio $M=2$, a convergence test showed in ~\ref{fig:generic_intf_convergence} is realized.
The convergence rate over 1 confirms the theoretical prediction of Theorem~\ref{thm:error.estimate}.

\begin{figure}
  \centering
  \includegraphics[width=\textwidth]{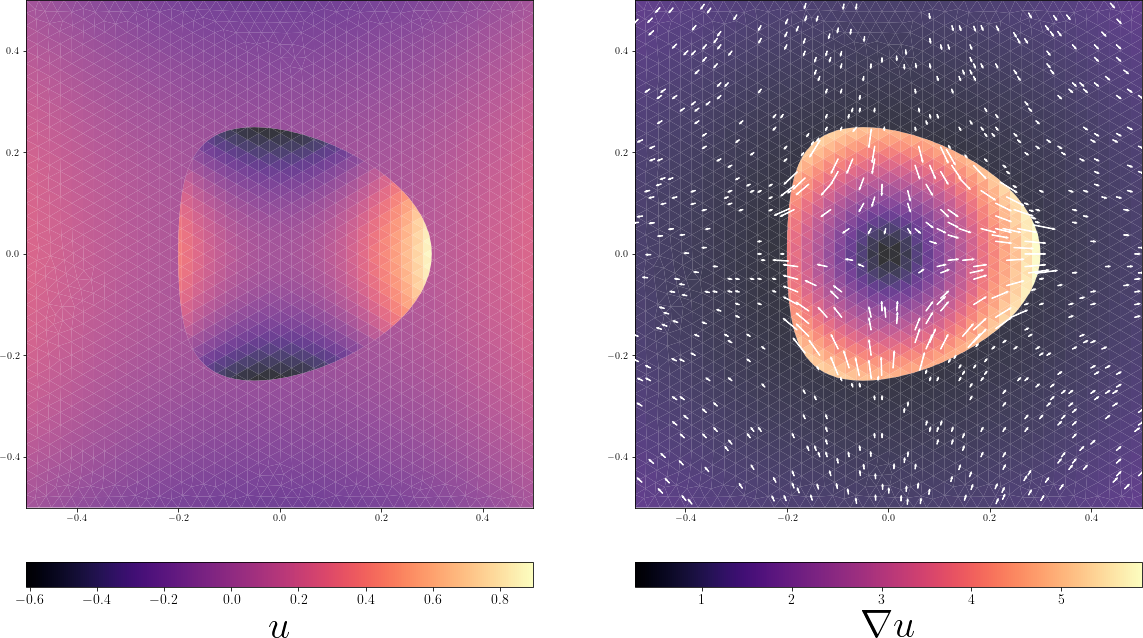} 
  \caption{The exact solution~\eqref{eq:square.test:exact.solution} considered 
    in Section~\ref{sec:numerical.tests:generic} and its gradient  
    with $\frac{\sigma_\INT}{\sigma_\EXT} = 10^{-1}$.}
  \label{fig:generic_intf_solution}
\end{figure}

\begin{figure}
  \begin{subfigure}[b]{0.45\textwidth}
    \centering
    \includegraphics[width=\textwidth]{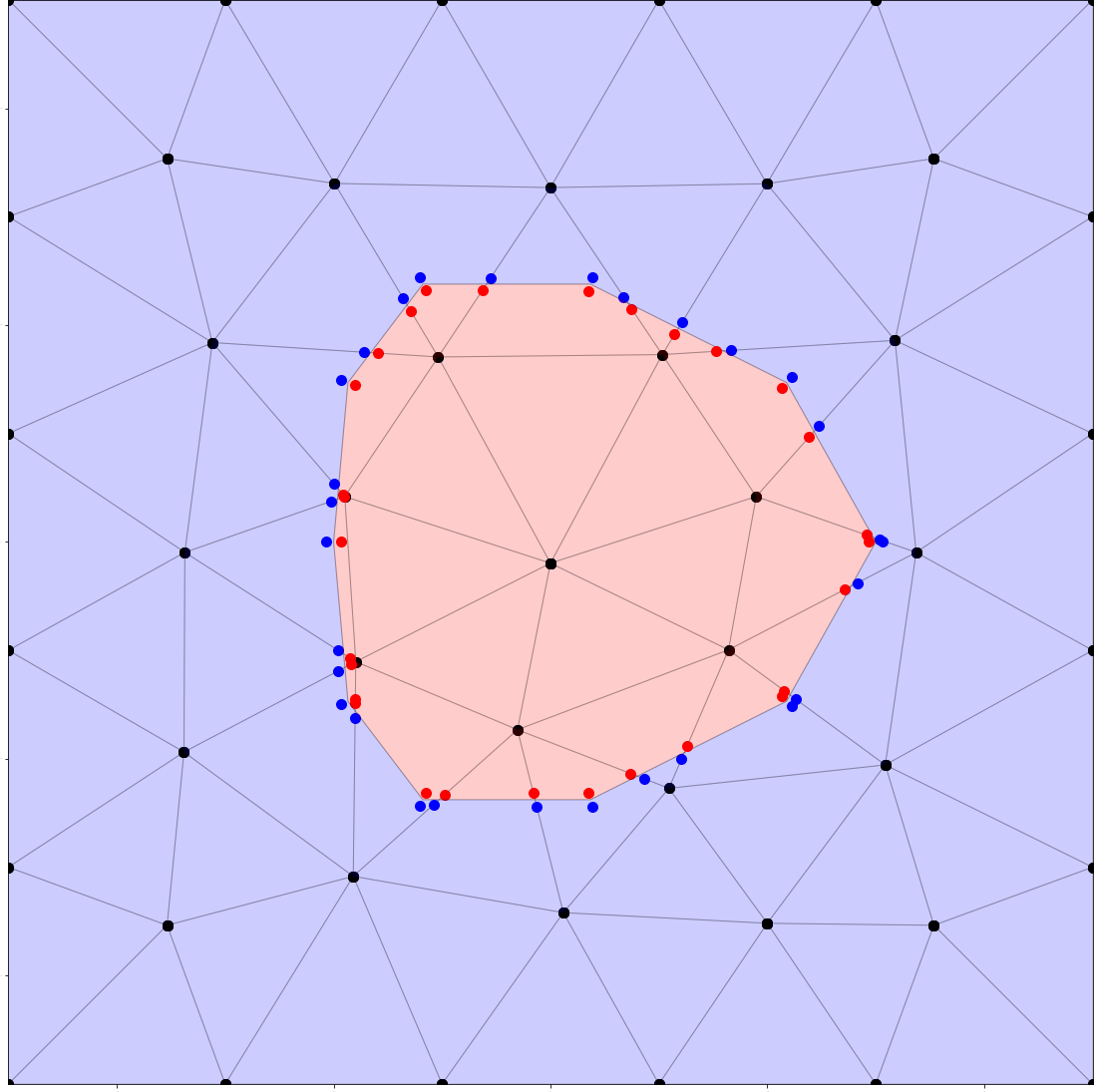}
  \end{subfigure}
  \hfill
  \begin{subfigure}[b]{0.45\textwidth}
    \centering
    \includegraphics[width=\textwidth]{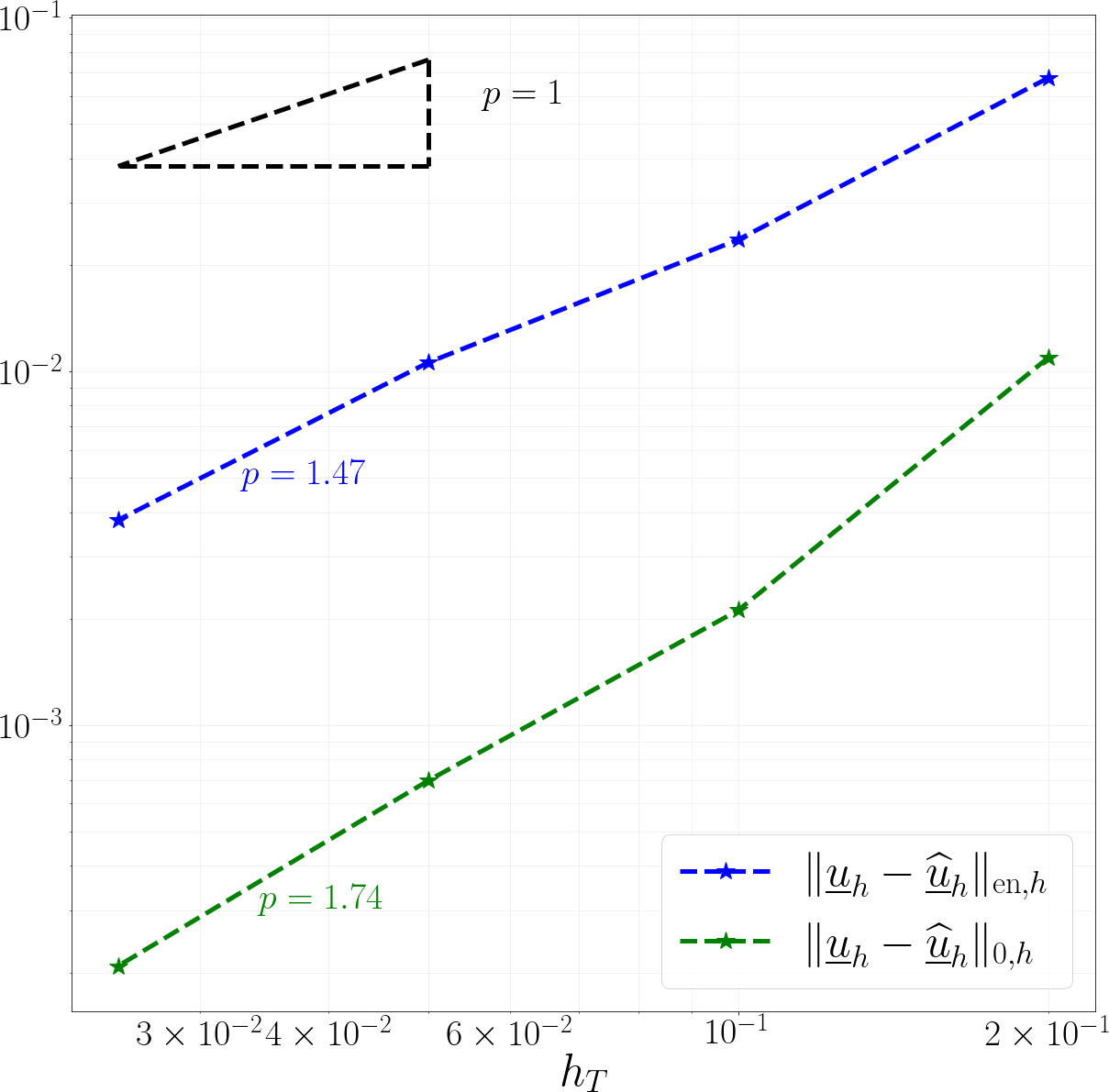}
  \end{subfigure}
  \caption{Convergence test for the case of a generic interface of Section 
    ~\ref{sec:numerical.tests:generic}. 
    On the left, a member of the mesh sequence. The red area represents $\Omega_\INT$ and 
    the blue one $\Omega_\EXT$. Dots represent the distribution of 
    degrees of freedom for the first element of the mesh sequence.
    On nodes belonging to $\Gamma$ they are doubled. On the right, $p$ 
    indicates the estimated convergence order.
    \label{fig:generic_intf_convergence}}
\end{figure}

\section{Application to the Leaky Dielectric Model}\label{sec:ldm}

In this section we discuss a version of problem \eqref{eq:strong} where the interface jump $J_\Gamma$ is time dependent and obeys an evolution equation depending on the interface gradient of $u$.

\subsection{Continuous setting}

Given a final time $\tF > 0$, a source term $f:(0,\tF\rbrack\to\Real$, and an initial potential jump $J_\Gamma^0$, we consider the problem of finding the time-dependent potential $u = (u_\INT, u_\EXT)$ with $u_\bullet : \Omega_\bullet\times (0,\tF\rbrack \to \Real$, $\bullet \in \{ \INT, \EXT \}$ and the interface jump 
$J_\Gamma: \Gamma \times [0,\tF] \to \Real$ such that
\begin{subequations}\label{eq:strong_time_dep}
  \begin{alignat}{4}
    -\nabla\cdot(\sigma_\bullet \nabla u_\bullet) &= f
    &\qquad& \text{in $\Omega_\bullet \times (0,\tF\rbrack$, $\bullet \in \{ \INT, \EXT \}$}, 
    \\
    \jumpG{u} &= J_\Gamma &\qquad& \text{on $\Gamma\times (0,\tF\rbrack$}, 
    \\
    \jumpG{\sigma\nabla u} \cdot n_\Gamma &= \Phi_\Gamma &\qquad& \text{on $\Gamma\times (0,\tF\rbrack$},       
    \\
    u_\EXT &= 0 &\qquad& \text{on $\partial\Omega\times (0,\tF\rbrack$}
    \\
    C \partial_t{J_\Gamma} &= \sigma_\bullet \nabla u_\bullet \cdot n_\Gamma &\qquad & \text{on $\Gamma\times (0,\tF\rbrack$, $\bullet \in \{ \INT, \EXT \}$}
    \label{subeq:jump_evol}
    \\ J_\Gamma(\cdot,0) &= J_\Gamma^0 &\qquad &  \text{on  $\Gamma$},
  \end{alignat}
\end{subequations}
with $C > 0$.
Problem \eqref{eq:strong_time_dep} models a situation where two media with electric conductivity respectively equal to $\sigma_\INT$ and $\sigma_\EXT$ occupy the regions $\Omega_\INT$ and $\Omega_\EXT$.
The interface $\Gamma$ between the two media is characterized by a capacitance $C$.
The variation rate of the charge in the bulk is $f$ and the interface supports a surface charge $\Phi_\Gamma$.
The region $\Omega_\bullet$ with $\bullet \in \{ \INT, \EXT \}$ is characterized by an electrostatic potential $u_\bullet$ to determine. 
The potential is discontinuous at the interface, with a jump $J_\Gamma$ to determine, and vanishes on the boundary of $\Omega$.

\subsection{Discrete problem}

To adapt the scheme \eqref{eq:discrete} to problem \eqref{eq:strong_time_dep}, it is necessary to introduce a time stepping scheme and a suitable discrete space to describe the new variable $J_\Gamma$.
For the sake of simplicity, we describe the adaption in the case of $k=0$.
Consider $N \ge 1$ time steps with duration $\tau=\nicefrac{\tF}{N}$.
For any time-dependent variable $w$, we introduce the set of time-independent variables $\{w^n\}_{n\leq N}$ such that $w^n(x) = w(x, n \tau)$.
An explicit Euler scheme is adopted to replace \eqref{subeq:jump_evol} with:
\[
\frac{C}{\tau} \big(J_\Gamma^{n+1} - J_\Gamma^n \big) 
= \sigma_\INT \nabla u^n_\INT \cdot n_\Gamma
= \sigma_\EXT \nabla u^n_\EXT \cdot n_\Gamma.
\]
The equation is integrated along $\Gamma$ after multiplying by a test function $Q_\Gamma^n:\Gamma \to \Real$:
\[
\frac{C}{\tau} 
\left(
\int_\Gamma J_\Gamma^{n+1} Q_\Gamma^n  - \int_\Gamma J_\Gamma^n Q_\Gamma^n
\right) 
= \int_\Gamma ( \sigma_\INT \nabla u^n_\INT \cdot n_\Gamma ) Q_\Gamma^n
= \int_\Gamma ( \sigma_\EXT \nabla u^n_\EXT \cdot n_\Gamma ) Q_\Gamma^n.
\]
Consider the collection of
interface vertices $\VGh$ and introduce the space
of interface variables:
\[
\underline{Z}_{\Gamma,h} \coloneq 
\Big\{ 
\underline{p}_{\Gamma,h} = (p_V)_{V\in \VGh} \st p_V \in \Real \quad
\forall  V \in \VGh
\Big\}. 
\]
Given $\underline{p}_{\Gamma,h}\in \underline{Z}_{\Gamma,h}$ and $E\in \EGh$, define $p_E\in\Poly{1}(E)$ as the unique affine function that takes the value $p_V$ at every endpoint $V$ of $E$.
Equip $\underline{Z}_{\Gamma,h}$  with the following norm:
\[
\| p_{\Gamma,h}\|_{0,\Gamma,h}^2 \coloneq \sum_{E\in \EGh} \| p_E \|_{E}^2.
\]
We set $\underline{J}_{\Gamma,h}^0 \coloneq (J_\Gamma^0(x_V))_{V \in \VGh}$, with $x_V$ denoting the coordinate vector of the vertex $V$ and, for $n = 0, \ldots, N-1$, we advance in time solving the following problem:
Find $\underline{J}^{n+1}_{\Gamma,h} \in \underline{Z}_{\Gamma,h}$ such that
\begin{equation}
  \frac{C}{\tau} 
  \bigg( \sum_{E\in \EGh} \int_E J_E^{n+1} Q_E^n -
  \int_E J_E^n Q_E^n \bigg)  
  =\sum_{E\in \EGh} \int_E
  \avg{\sigma G_T \underline{u}_{T}} \cdot n_\Gamma Q_E
  \qquad \forall \underline{Q}^n_{\Gamma,h} \in \underline{Z}_{\Gamma,h}.
  \label{eq:jump_pb}
\end{equation}
Given $\underline{J}_{\Gamma,h}^n \in \underline{Z}_{\Gamma,h}^k$, 
denote by $\mathcal{M}_h: \underline{Z}_{\Gamma,h}^k \to \underline{V}_{h}^0$ the operator
that associates to a jump $\underline{J}_\Gamma^n$ the solution $\underline{u}_h$ 
of the stationary problem \eqref{eq:discrete}. Likewise, call 
$\mathcal{N}_h: \underline{V}_{h}^0 \times \underline{Z}_{\Gamma,h} 
\to \underline{Z}_{\Gamma, h}$ the operator
that associates to a potential $\underline{u}_h$ and a jump 
$\underline{J}_{\Gamma,h}^n$ the jump $\underline{J}_{\Gamma,h}^{n+1}$
solution to problem \eqref{eq:jump_pb}. 
Then, the time advancement algorithm for the case of an evolving 
jump reads:
Given $\underline{J}^0_{\Gamma,h}$, for $n = 0, \ldots, N-1$, set, in this order,
\[
\begin{aligned}
  \underline{u}^n_h &= \mathcal{M}_h (\underline{J}_\Gamma^n), \\
  \underline{J}^{n+1}_{\Gamma, n} &= 
  \mathcal{N}_h(\underline{u}_h, \underline{J}^n_{\Gamma,h}).
\end{aligned}
\]

\subsection{Numerical tests}

To numerically assess the performance method, we consider a test case with $f=0$, $\Phi_\Gamma=0$, $J_\Gamma^0 = 0$.
This set of conditions is encountered in the description of the electric potential in the context of the Leaky Dielectric Model, and represents a situation where neither the bulk nor the surface support electric charge. 
Consider a circular interface of radius $R > 0$ immersed in a uniform far field  $E\in\Real^2$, such that $\lim_{\|x\|\to\infty} \nabla u = E$, with $\| \cdot \|$ denoting the Euclidean norm in $\Real^2$.
The solution reads:
\begin{equation}\label{eq:solution_time_dep}
  \widehat{u}(x,t) = \exp\left(-\frac{t}{t_{\rm c}}\right)(u^0(x)-u^\infty(x)) + u^\infty(x),
\end{equation}
with
\[
\begin{gathered}
  u^0 = 
  \begin{cases}
    E\frac{2\sigma_\EXT}{\sigma_\EXT+\sigma_\INT} x \quad
    & \text{if $\|x\|<R$} \\
    E\left[
      1 + \left(
      \frac{\sigma_\EXT-\sigma_\INT}{\sigma_\EXT+\sigma_\INT}
      \right)
      \frac{R^2}{\|x\|^2}
      \right] x \quad
    & \text{if $\| x \|>R$}, \end{cases}\\
  u^\infty = 
  \begin{cases}
    0
    & \text{if $\| x \|<R$}
    \\
    E \left(1+\frac{R^2}{\|x\|^2} \right) x
    & \text{if $\| x \|>R$}
  \end{cases}
\end{gathered}
\]
and $t_{\rm c} = C R \left(\frac{1}{\sigma_\INT}+\frac{1}{\sigma_\EXT}\right)$ 
(see Figure~\ref{fig:solution_time_dep}).
The system evolves from an initial condition with no potential jump at the interface to a condition of electrostatic equilibrium, where the current flow through the interface $\nabla u_\bullet \cdot n_\Gamma$ with $\bullet\in\{\INT, \EXT\}$ vanishes from either side and the electric field inside the interface is completely screened out. 

\begin{figure}
  \centering
  \includegraphics[width=0.75\textwidth]{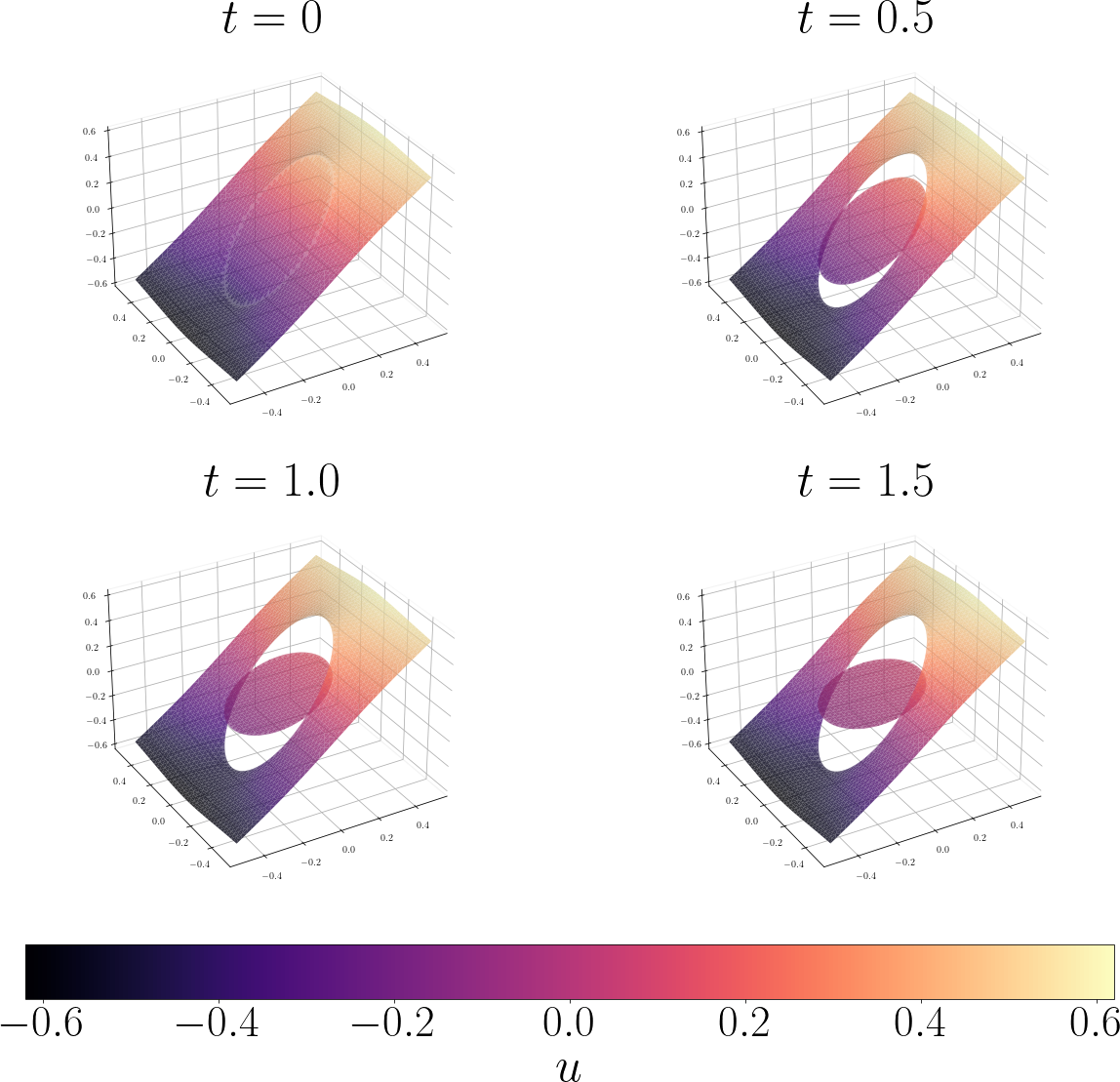}
  \caption{Solution ~\ref{eq:solution_time_dep} presented in Section~\ref{sec:ldm}. 
    Test performed with $R=\nicefrac14$, $E=1$, $\sigma_\INT/\sigma_\EXT=10^{-1}$,
    and $C$ set such that $t_{\rm c}=1$.}
  \label{fig:solution_time_dep}
\end{figure}

To monitor the convergence of the scheme, we consider a mesh sequence 
realized with the same family of triangular background meshes of 
Section ~\ref{sec:numerical.tests:circle}. The interface is refined with a 
refinement ratio $M=2$, and a sequence of time steps decreasing by a factor 4
is considered. The profile of the error for both the 
potential $u$ and the jump $J_\Gamma$ is displayed in Figure 
~\ref{fig:time_dep_convergence}. Results show that the $L^2$-temporal norm of the energy error decreases with an order slightly lower than 1.
An convergence slightly above $0.5$ is observed for the time-space $L^2$-norm of the interface jump.

\begin{figure}
  \centering
  \includegraphics[width=\textwidth]{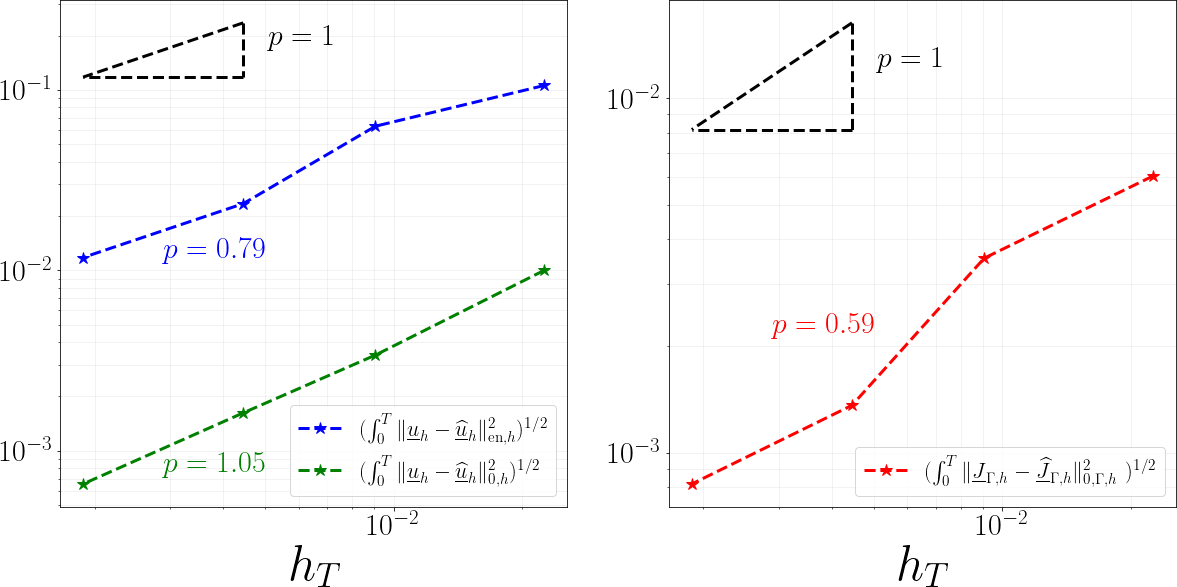}
  \caption{Convergence test for the time-dependent scheme used to reproduce 
    Solution ~\ref{eq:solution_time_dep}} presented in Section~\ref{sec:ldm}. 
  The simulation is run with $\tF=2t_{\rm c}=2$.
  \label{fig:time_dep_convergence}

\end{figure}


\section{Proofs of the main results}\label{sec:proofs}

This section collects the proofs of Lemma~\ref{lem:stability} and Theorem~\ref{thm:error.estimate}.

\subsection{Proof of Lemma~\ref{lem:stability}}\label{sec:proofs:stability}

We recall the following discrete trace inequality valid for any integer $m \ge 0$, any $T \in \Th$, and any $E \in \ET$:
For all $\varphi \in \Poly{m}(T)$,
\begin{equation}\label{eq:trace}
  \| \varphi \|_{L^2(E)} \le C_{\rm tr} h_E^{-\nicefrac12} \| \varphi \|_{L^2(T)},
\end{equation}
where $C_{\rm tr}$ only depends on $m$ and the mesh regularity parameter.

\begin{lemma}[Estimate of the consistency interface term]
  Let $N_\partial \coloneq \max_{T \in \mathcal{T}_h} \operatorname{card}(\ET \cap \EGh)$.
  Then, for all $(\underline{w}_h, \underline{v}_h) \in \underline{V}_h^k \times \underline{V}_h^k$ and any real number $\epsilon > 0$, it holds
  \begin{equation}\label{eq:consistency.term.estimate}
    \sum_{E \in \EGh} \int_E \avg{\sigma \Gh \underline{w}_h} \cdot n_E \jump{\underline{v}_h}
    \le \epsilon \| \sigma^{\nicefrac12} \Gh \underline{w}_h \|_{L^2(\Omega)^2}^2
    + \frac{C_{\rm tr}^2 N_\partial}{4 \epsilon} | \underline{v}_h |_{{\rm J},h}^2.
  \end{equation}
\end{lemma}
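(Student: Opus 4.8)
The plan is to bound the interface sum edge by edge, using Cauchy--Schwarz on each edge $E \in \EGh$, then a discrete trace inequality to pass from the $L^2(E)$-norm of the weighted average of $\sigma \Gh \underline{w}_h$ to the $L^2$-norm on the two adjacent elements $T_\INT, T_\EXT$, and finally a weighted Young inequality to split the result into the two terms on the right-hand side. The weights $\lambda_\INT, \lambda_\EXT$ from \eqref{eq:k.int.ext} and the definition \eqref{eq:alpha} of $\alpha$ are chosen precisely so that the $\sigma$-dependence works out uniformly; keeping track of this is the one point that needs care.

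First I would fix $E \in \EGh$ with adjacent elements $T_\INT \in \Th^\INT$ and $T_\EXT \in \Th^\EXT$, and write, using Cauchy--Schwarz on $E$ together with $\| n_E \|_{L^\infty(E)^2} \le 1$,
\[
\int_E \avg{\sigma \Gh \underline{w}_h} \cdot n_E \jump{\underline{v}_h}
\le \| \avg{\sigma \Gh \underline{w}_h} \|_{L^2(E)^2}\, \| \jump{\underline{v}_h} \|_{L^2(E)}.
\]
By the definition \eqref{eq:avg} of the weighted average and the triangle inequality,
\[
\| \avg{\sigma \Gh \underline{w}_h} \|_{L^2(E)^2}
\le \lambda_\INT \sigma_\INT \| \GT[T_\INT] \underline{w}_{T_\INT} \|_{L^2(E)^2}
+ \lambda_\EXT \sigma_\EXT \| \GT[T_\EXT] \underline{w}_{T_\EXT} \|_{L^2(E)^2},
\]
and the discrete trace inequality \eqref{eq:trace} applied to each component of $\GT \underline{w}_T \in \Poly{k}(T)^2$ gives $\| \GT \underline{w}_T \|_{L^2(E)^2} \le C_{\rm tr} h_E^{-\nicefrac12} \| \GT \underline{w}_T \|_{L^2(T)^2}$ for $T \in \{T_\INT, T_\EXT\}$. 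I would then use $\lambda_\bullet \sigma_\bullet \le \tfrac12 \alpha^{\nicefrac12} (\sigma_\bullet)^{\nicefrac12}$ — which is in fact an equality since $\lambda_\INT \sigma_\INT = \lambda_\EXT \sigma_\EXT = \tfrac12 \alpha$ by \eqref{eq:k.int.ext}--\eqref{eq:alpha} — to rewrite the bound as
\[
\| \avg{\sigma \Gh \underline{w}_h} \|_{L^2(E)^2}
\le \frac{C_{\rm tr}}{2} h_E^{-\nicefrac12} \alpha^{\nicefrac12}
\sum_{T \in \TE} (\sigma_T)^{\nicefrac12} \| \GT \underline{w}_T \|_{L^2(T)^2}.
\]
Multiplying by $\| \jump{\underline{v}_h} \|_{L^2(E)}$, recognising $h_E^{-\nicefrac12}\alpha^{\nicefrac12}\| \jump{\underline{v}_h} \|_{L^2(E)}$ as (the square root of) the $E$-contribution to $| \underline{v}_h |_{{\rm J},h}^2$, and applying the weighted Young inequality $ab \le \epsilon a^2 + \tfrac{1}{4\epsilon} b^2$ with $a = \sum_{T \in \TE}(\sigma_T)^{\nicefrac12}\|\GT\underline{w}_T\|_{L^2(T)^2}$ and $b = C_{\rm tr} h_E^{-\nicefrac12}\alpha^{\nicefrac12}\|\jump{\underline{v}_h}\|_{L^2(E)}$, together with $(a_1+a_2)^2 \le 2(a_1^2+a_2^2)$ for the two-element sum, yields
\[
\int_E \avg{\sigma \Gh \underline{w}_h} \cdot n_E \jump{\underline{v}_h}
\le \epsilon \sum_{T \in \TE} \| \sigma^{\nicefrac12} \GT \underline{w}_T \|_{L^2(T)^2}^2
+ \frac{C_{\rm tr}^2}{4\epsilon} \frac{\alpha}{h_E} \| \jump{\underline{v}_h} \|_{L^2(E)}^2.
\]
Finally I would sum over $E \in \EGh$. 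In the first term, each element $T$ appears as $T_\INT$ or $T_\EXT$ for at most $N_\partial = \max_{T}\operatorname{card}(\ET \cap \EGh)$ interface edges, so the total is bounded by $\epsilon N_\partial \| \sigma^{\nicefrac12}\Gh\underline{w}_h\|_{L^2(\Omega)^2}^2$ — actually, to land exactly on the stated constant one absorbs the factor $N_\partial$ into the second term instead, choosing the Young weight as $\epsilon/N_\partial$ rather than $\epsilon$ at the edge level, so that the first term sums cleanly to $\epsilon \| \sigma^{\nicefrac12}\Gh\underline{w}_h\|_{L^2(\Omega)^2}^2$ and the second to $\tfrac{C_{\rm tr}^2 N_\partial}{4\epsilon} | \underline{v}_h |_{{\rm J},h}^2$; the second term sums directly since $\sum_{E\in\EGh} \tfrac{\alpha}{h_E}\|\jump{\underline{v}_h}\|_{L^2(E)}^2 = | \underline{v}_h |_{{\rm J},h}^2$ by \eqref{eq:seminorm.J}. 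This gives \eqref{eq:consistency.term.estimate}.

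The only genuine subtlety — the step I would flag as the crux — is the algebraic identity $\lambda_\INT \sigma_\INT = \lambda_\EXT \sigma_\EXT = \tfrac{\alpha}{2}$, which is exactly what makes the $\sigma$-weights in $\avg{\cdot}$ match the $\alpha$-weight in $| \cdot |_{{\rm J},h}$ and thereby produces a bound with no residual dependence on the contrast $\sigma_\INT/\sigma_\EXT$; everything else is Cauchy--Schwarz, the discrete trace inequality \eqref{eq:trace}, Young's inequality, and bookkeeping of the overlap constant $N_\partial$. A minor care point is that $\GT\underline{w}_T$ is vector-valued, so \eqref{eq:trace} is applied componentwise (legitimate since each component lies in $\Poly{k}(T)$) and the resulting $C_{\rm tr}$ is the one for degree $k$.
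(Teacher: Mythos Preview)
Your overall strategy is the same as the paper's: Cauchy--Schwarz on each interface edge, the discrete trace inequality \eqref{eq:trace}, an algebraic relation between the weights $\lambda_\bullet$, $\sigma_\bullet$, and $\alpha$, then Young's inequality and the $N_\partial$ overlap count. The only substantive difference is that the paper sums over $E\in\EGh$ first (using Cauchy--Schwarz on the sum) and applies Young once at the end, whereas you apply Young edge by edge and then sum; both orderings yield the same constant.

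However, the step you flag as ``the crux'' is incorrect as written. You claim
\[
\lambda_\bullet\,\sigma_\bullet \;\le\; \tfrac12\,\alpha^{1/2}\,\sigma_\bullet^{1/2},
\]
and even assert it is an equality. Since $\lambda_\bullet\sigma_\bullet = \alpha/2$ (which you correctly observe), the claimed relation is equivalent to $\alpha \le \sigma_\bullet$. But $\alpha$ is the harmonic mean of $\sigma_\INT$ and $\sigma_\EXT$, hence lies \emph{between} them: the inequality fails for the smaller of the two. The correct bound, used in the paper, is
\[
\alpha^{-1/2}\,\lambda_\bullet\,\sigma_\bullet^{1/2} \;=\; \tfrac{1}{\sqrt{2}}\,\sqrt{\lambda_\bullet} \;<\; \tfrac{1}{\sqrt{2}},
\]
which follows from $\lambda_\bullet < 1$ and a direct computation from \eqref{eq:k.int.ext}--\eqref{eq:alpha}. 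Equivalently, $\lambda_\bullet\sigma_\bullet < \frac{1}{\sqrt{2}}\,\alpha^{1/2}\sigma_\bullet^{1/2}$: the constant is $\frac{1}{\sqrt{2}}$, not $\frac12$. With this correction, your displayed bound on $\|\avg{\sigma\Gh\underline{w}_h}\|_{L^2(E)^2}$ holds with $\frac{C_{\rm tr}}{\sqrt{2}}$ in place of $\frac{C_{\rm tr}}{2}$, and the rest of your argument (per-edge Young with parameter $\epsilon/N_\partial$, then summation) goes through unchanged and reproduces exactly the constant $\frac{C_{\rm tr}^2 N_\partial}{4\epsilon}$ of \eqref{eq:consistency.term.estimate}.
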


\begin{proof}
  Let $E \in \EGh$.
  Using a $(2,\infty,2)$-H\"{o}lder inequality, we can write
  \[
  \begin{aligned}
    &\int_E \avg{\sigma \Gh \underline{w}_h} \cdot n_E \jump{\underline{v}_h}
    \\
    &\quad
    \begin{aligned}[t]
      &\le
      \| \avg{\sigma \Gh \underline{w}_h} \|_{L^2(E)^2}
      \| n_E \|_{L^\infty(E)^2}
      \| \jump{\underline{v}_h} \|_{L^2(E)}
      \\
      \overset{\eqref{eq:avg}}&\le
      \alpha^{-\nicefrac12}\left(
      \lambda_\INT \sigma_\INT^{\nicefrac12} \| \sigma_\INT^{\nicefrac12} G_{T_\INT}^k \underline{w}_{T_\INT} \|_{L^2(E)^2}
      + \lambda_\EXT \sigma_\EXT^{\nicefrac12} \| \sigma_\EXT^{\nicefrac12} G_{T_\EXT}^k \underline{w}_{T_\EXT} \|_{L^2(E)^2}
      \right)~ \alpha^{\nicefrac12} \| \jump{\underline{v}_h} \|_{L^2(E)},
    \end{aligned}
  \end{aligned}    
  \]
  where, in the second equality, we have additionally used the fact that $\| n_E \|_{L^\infty(E)^2} \le 1$.
  Noticing that, by definition \eqref{eq:k.int.ext} of $\lambda_\bullet$ and \eqref{eq:alpha} of $\alpha$, and since $\lambda_\bullet < 1$,
  $\alpha^{-\nicefrac12} \lambda_\bullet \sigma_\bullet^{\nicefrac12} < \frac1{\sqrt{2}}$, $\bullet \in \{ \INT, \EXT \}$, we can go on writing
  \[
  \begin{aligned}
    &\int_E \avg{\sigma \Gh \underline{w}_h} \cdot n_E \jump{\underline{v}_h}
    \\
    &\quad
    \begin{aligned}[t]    
      &< \frac{1}{\sqrt{2}}\left(
      \| \sigma_\INT^{\nicefrac12} G_{T_\INT}^k \underline{w}_{T_\INT} \|_{L^2(E)^2}
      + \| \sigma_\INT^{\nicefrac12} G_{T_\EXT}^k \underline{w}_{T_\EXT} \|_{L^2(E)^2}
      \right)~ \alpha^{\nicefrac12} \| \jump{\underline{v}_h} \|_{L^2(E)}
      \\
      \overset{\eqref{eq:trace},\,\eqref{eq:seminorm.J}}&\le
      \frac{C_{\rm tr}}{\sqrt{2}}\left(
      \| \sigma_\INT^{\nicefrac12} G_{T_\INT}^k \underline{w}_{T_\INT} \|_{L^2(T_\INT)^2}
      + \| \sigma_\INT^{\nicefrac12} G_{T_\EXT}^k \underline{w}_{T_\EXT} \|_{L^2(T_\EXT)^2}
      \right)~\left(\frac{\alpha}{h_E}\right)^{\nicefrac12} \| \jump{\underline{v}_h} \|_{L^2(E)}.
    \end{aligned}
  \end{aligned}
  \]
  Summing the above inequality over $E \in \EGh$, using a Cauchy--Schwarz inequality along with the fact that $(a + b)^2 \le 2 (a^2 + b^2)$, and recalling the definition \eqref{eq:seminorm.J} of $| \cdot |_{{\rm J},h}$, we obtain
  \[
  \begin{aligned}
    &\sum_{E \in \EGh} \int_E \avg{\sigma \Gh \underline{w}_h} \cdot n_E \jump{\underline{v}_h}
    \\
    &\quad\le
    C_{\rm tr} \left[
      \sum_{E \in \EGh} \left(
      \| \sigma_\INT^{\nicefrac12} G_{T_\INT}^k \underline{w}_{T_\INT} \|_{L^2(T_\INT)^2}^2
      + \| \sigma_\INT^{\nicefrac12} G_{T_\EXT}^k \underline{w}_{T_\EXT} \|_{L^2(T_\EXT)^2}^2        
      \right)
      \right]^{\nicefrac12}
    | \underline{v}_h |_{{\rm J},h}
    \\
    &\quad\le
    C_{\rm tr} N_\partial^{\nicefrac12} \| \sigma^{\nicefrac12} \Gh \underline{w}_h \|_{L^2(\Omega)^2}
    | \underline{v}_h |_{{\rm J},h}
    \\
    &\quad\le
    \epsilon \| \sigma^{\nicefrac12} \Gh \underline{w}_h \|_{L^2(\Omega)^2}^2
    + \frac{C_{\rm tr}^2 N_\partial}{4 \epsilon} | \underline{v}_h |_{{\rm J},h}^2,
  \end{aligned}
  \]
  the conclusion being a consequence of the generalized Young's inequality.
\end{proof}

\begin{proof}[Proof of Lemma~\ref{lem:stability}]
  Recalling the expression \eqref{eq:ah} of $a_h$ and using \eqref{eq:consistency.term.estimate}, we obtain, for all $\underline{v}_h \in \underline{V}_{h,0}^k$,
  \begin{multline*}
    a_h(\underline{v}_h, \underline{v}_h)
    \ge (1-\epsilon) \| \sigma^{\nicefrac12} \Gh \underline{v}_h \|_{L^2(\Omega)^2}^2
    \\
    + \sum_{T \in \Th} \frac{\sigma_T}{h_T} \sum_{E \in \ET} \| \pT \underline{v}_T - v_{TE} \|_{L^2(E)^2}^2
    + \left( \eta - \frac{C_{\rm tr}^2 N_\partial}{4 \epsilon} \right) | \underline{v}_h |_{{\rm J},h}^2,
  \end{multline*}
  from which the conclusion readily follows recalling the definition \eqref{eq:energy.norm} of the energy norm.
\end{proof}

\subsection{Proof of Theorem~\ref{thm:error.estimate}}\label{sec:proofs:error.estimate}

\begin{lemma}[Estimate of the consistency error]\label{lem:consistency}
  Assume $u \in C^0(\overline{\Omega}_\INT) \times C^0(\overline{\Omega}_\EXT)$ and define the consistency error linear form $\mathcal{E}_h : \underline{V}_{h,0}^k \to \Real$ such that, for all $\underline{v}_h \in \underline{V}_{h,0}^k$,
  \[
  \mathcal{E}_h(\underline{v}_h)
  \coloneq \ell_h(\underline{v}_h) - a_h(\Ih u, \underline{v}_h).
  \]
  Then, provided that $u \in H^{r+2}(\Th^\INT) \times H^{r+2}(\Th^\EXT)$ for some $r \in \{0,\ldots,k\}$, it holds
  \begin{equation}\label{eq:Eh:estimate}
    \sup_{\underline{v}_h \in \underline{V}_{h,0}^k \setminus \{ \underline{0} \}} \frac{\mathcal{E}_h(\underline{v}_h)}{\| \underline{v}_h \|_{{\rm en},h}}
    \lesssim \overline{\sigma} h^{r+1} | u |_{H^{r+2}(\Th)},
  \end{equation}
  where $\overline{\sigma} \coloneq \max \{ \sigma_\INT, \sigma_\EXT \}$,
  and the hidden constant depends only on the domain, the stability constant $C_{\rm stab}$ in \eqref{eq:stability}, the polynomial degree $k$, and the mesh regularity parameter (but is independent of both the meshsize and $\sigma$).
\end{lemma}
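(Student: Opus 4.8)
The idea is to show that the scheme is consistent \emph{by design}. Write $\widehat{\underline{u}}_h \coloneq \Ih u$ and $\widehat{\underline{u}}_T \coloneq \IT u$. I would expand $\mathcal{E}_h(\underline{v}_h) = \ell_h(\underline{v}_h) - a_h(\widehat{\underline{u}}_h, \underline{v}_h)$ termwise along \eqref{eq:ah}--\eqref{eq:lh}; the only nonroutine ingredient is a reformulation of the bulk term $\sum_{T \in \Th} \int_T \sigma_T \GT \widehat{\underline{u}}_T \cdot \GT \underline{v}_T$. For each $T \in \Th$, testing \eqref{eq:pT.bis} with $\tau = \sigma_T \lproj{k}{T}(\nabla u) \in \Poly{k}(T)^2$, integrating the resulting left-hand side $\int_T \sigma_T \nabla \pT \underline{v}_T \cdot \nabla u$ by parts, using the bulk equation \eqref{eq:strong:diff_eq} in the form $-\nabla\cdot(\sigma_T \nabla u) = f$ on $T$, and exploiting $\GT \underline{v}_T, \GT \widehat{\underline{u}}_T \in \Poly{k}(T)^2$ to discard $\lproj{k}{T}$ and to insert $\GT \widehat{\underline{u}}_T$, one obtains
\begin{multline*}
  \int_T \sigma_T \GT \widehat{\underline{u}}_T \cdot \GT \underline{v}_T
  = \int_T f\, \pT \underline{v}_T
  + \sum_{E \in \ET} \omega_{TE} \int_E v_{TE}\, (\sigma_T \nabla u \cdot n_E)
  \\
  + \sum_{E \in \ET} \omega_{TE} \int_E (\pT \underline{v}_T - v_{TE})\, \sigma_T\big((\nabla u - \lproj{k}{T}\nabla u) \cdot n_E\big)
  + \int_T \sigma_T (\GT \widehat{\underline{u}}_T - \nabla u) \cdot \GT \underline{v}_T .
\end{multline*}

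I would then sum over $T \in \Th$ and reorganize the second term above by edges $E \in \Eh$. Boundary edges do not contribute since $v_{TE} = 0$ for $\underline{v}_h \in \underline{V}_{h,0}^k$; interior edges not lying on $\Gamma$ cancel pairwise because there $v_{TE}$ is single-valued, $\sigma$ is single-valued, the exact flux $\sigma\nabla u \cdot n_E$ is single-valued (as $u$ solves \eqref{eq:strong}), and the two relative orientations are opposite. On each $E \in \EGh$ (with $n_E$ oriented out of $\Omega_\INT$, writing $g_\bullet \coloneq (\sigma_\bullet \nabla u_\bullet)_{|E} \cdot n_E$) the contribution is $\int_E (v_{T_\INT E}\, g_\INT - v_{T_\EXT E}\, g_\EXT)$; the weighted ``magic formula''
\[
  v_{T_\INT E}\, g_\INT - v_{T_\EXT E}\, g_\EXT
  = \savg{\underline{v}_h}\,(g_\INT - g_\EXT)
  + \jump{\underline{v}_h}\,(\lambda_\INT g_\INT + \lambda_\EXT g_\EXT),
\]
valid because $\lambda_\INT + \lambda_\EXT = 1$, together with the interface conditions $\jumpG{u} = J_\Gamma$ and $\jumpG{\sigma\nabla u}\cdot n = \Phi_\Gamma$, turns this into $\int_E \Phi_\Gamma \savg{\underline{v}_h} + \int_E \avg{\sigma\nabla u}\cdot n_E\, \jump{\underline{v}_h}$ --- i.e. precisely the interface contributions present in $\ell_h$ and (up to $\Gh \widehat{\underline{u}}_h \approx \nabla u$) in $a_h$. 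This step, which crucially relies on the trace operators being built from the \emph{edge} potential rather than the element potential, is the one I expect to be the main obstacle: it is where one verifies that the $f$-- and $\Phi_\Gamma$--terms produced by $a_h(\widehat{\underline{u}}_h, \cdot)$ match verbatim those of $\ell_h$, with only the ``consistent'' leftover (see Remark~\ref{rem:comparison.dG}).

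After these cancellations, $\mathcal{E}_h(\underline{v}_h)$ is left as a sum of five residuals: the penalty residual $\eta\sum_{E \in \EGh} \frac{\alpha}{h_E}\int_E (J_\Gamma - \jump{\widehat{\underline{u}}_h})\jump{\underline{v}_h}$, the weighted-flux residual $\sum_{E \in \EGh}\int_E \avg{\sigma(\Gh\widehat{\underline{u}}_h - \nabla u)}\cdot n_E\, \jump{\underline{v}_h}$, the projection residual $\sum_{T}\sum_{E \in \ET}\omega_{TE}\int_E (\pT\underline{v}_T - v_{TE})\,\sigma_T((\nabla u - \lproj{k}{T}\nabla u)\cdot n_E)$, the gradient-consistency residual $\sum_{T}\int_T \sigma_T(\GT\widehat{\underline{u}}_T - \nabla u)\cdot\GT\underline{v}_T$, and the stabilization residual $\sum_{T}\frac{\sigma_T}{h_T}\sum_{E \in \ET}\int_E (\pT\widehat{\underline{u}}_T - \widehat{u}_{TE})(\pT\underline{v}_T - v_{TE})$. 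I would bound each by Cauchy--Schwarz after pairing it with the matching block of $\| \underline{v}_h \|_{{\rm en},h}$ --- the jump seminorm $| \underline{v}_h |_{{\rm J},h}$ for the first two, $\big(\sum_T \sigma_T\|\GT\underline{v}_T\|_{L^2(T)^2}^2\big)^{\nicefrac12}$ for the fourth, $\big(\sum_T \frac{\sigma_T}{h_T}\sum_E \|\pT\underline{v}_T - v_{TE}\|_{L^2(E)}^2\big)^{\nicefrac12}$ for the third and fifth --- invoking \eqref{eq:vTE:approximation} (plus $J_\Gamma = \jumpG{u}$) for the first, the boundary part of \eqref{eq:GT:approximation} for the second, a continuous trace inequality and optimal $L^2$-projection estimates for $\nabla u - \lproj{k}{T}\nabla u$ for the third, the volume part of \eqref{eq:GT:approximation} for the fourth, and \eqref{eq:pT:approximation} together with \eqref{eq:vTE:approximation} for the fifth. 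The feature that makes the estimate robust in the contrast $\sigma_\INT/\sigma_\EXT$ --- and the second delicate point --- is that the diffusion-weighted averages force every resulting coefficient to enter through $\alpha = \frac{2\sigma_\INT\sigma_\EXT}{\sigma_\INT + \sigma_\EXT} \le 2\,\overline{\sigma}$ (rather than through $\sigma_\INT$ or $\sigma_\EXT$ individually), which hinges on $\lambda_\INT + \lambda_\EXT = 1$ and $\lambda_\INT\sigma_\INT = \lambda_\EXT\sigma_\EXT = \nicefrac{\alpha}{2}$. Summing over edges and elements and dividing by $\| \underline{v}_h \|_{{\rm en},h}$ then yields \eqref{eq:Eh:estimate} (in fact with $\overline{\sigma}^{\nicefrac12}$ in place of $\overline{\sigma}$).
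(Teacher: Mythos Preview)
Your argument is correct and follows essentially the same architecture as the paper's proof: the ``magic formula'' for rewriting $v_{T_\INT E}\,g_\INT - v_{T_\EXT E}\,g_\EXT$, the edge reorganization with cancellation on non-interface edges, the substitution of the interface conditions \eqref{eq:strong:jump_phi}--\eqref{eq:strong:jump_j}, and the final Cauchy--Schwarz pairing of each residual with the matching block of the energy norm are all exactly what the paper does. You also correctly observe that the proof actually delivers $\overline{\sigma}^{\nicefrac12}$ rather than the $\overline{\sigma}$ stated in \eqref{eq:Eh:estimate}.

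The one point of genuine (minor) divergence is the choice of test function in \eqref{eq:pT.bis}. The paper takes $\tau = \sigma_T\,\GT\widehat{\underline{u}}_T$, which produces the residuals $\mathfrak{T}_1 = \sum_T\int_T\sigma_T(\nabla u - \GT\widehat{\underline{u}}_T)\cdot\nabla\pT\underline{v}_T$ and $\mathfrak{T}_2 = \sum_T\sum_E\omega_{TE}\int_E\sigma_T(\GT\widehat{\underline{u}}_T-\nabla u)\cdot n_E\,(\pT\underline{v}_T-v_{TE})$; bounding $\mathfrak{T}_1$ then requires the auxiliary estimate \eqref{eq:est.grad.pT} on $\|\nabla\pT\underline{v}_T\|$, while $\mathfrak{T}_2$ is handled directly by the trace part of \eqref{eq:GT:approximation}. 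Your choice $\tau = \sigma_T\,\lproj{k}{T}(\nabla u)$ instead yields the pair $\sum_T\int_T\sigma_T(\GT\widehat{\underline{u}}_T-\nabla u)\cdot\GT\underline{v}_T$ and $\sum_T\sum_E\omega_{TE}\int_E(\pT\underline{v}_T-v_{TE})\,\sigma_T(\nabla u-\lproj{k}{T}\nabla u)\cdot n_E$; this trades \eqref{eq:est.grad.pT} for a continuous trace inequality plus the standard approximation properties of $\lproj{k}{T}$ on $\nabla u$. Both routes are valid and of comparable length; yours is marginally more self-contained in that it pairs directly with $\|\GT\underline{v}_T\|_{L^2(T)^2}$, while the paper's keeps every approximation step inside the discrete-de-Rham toolbox \eqref{eq:vTE:approximation}--\eqref{eq:pT:approximation}. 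The remaining three residuals (stabilization, weighted-flux, penalty) coincide verbatim.
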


\begin{proof}
  Let $\underline{v}_h \in \underline{V}_{h,0}^k$.
  We reformulate the components of the consistency error $\mathcal{E}_h(\underline{v}_h)$ in order to make them comparable.
  Throughout the proof we let, for the sake of brevity, $\widehat{\underline{u}}_h \coloneq \Ih u$.
  \medskip\\
  \underline{1. \emph{Reformulation of $\ell_h(\underline{v}_h)$.}}
  Recalling \eqref{eq:strong:diff_eq}, $f = -\nabla \cdot (\sigma_\bullet \nabla u_\bullet)$ almost everywhere in $\Omega_\bullet$, $\bullet \in \{ \INT, \EXT \}$.
  We can therefore write for the first term in the right-hand side of \eqref{eq:lh}:
  \begin{equation}\label{eq:lh:intermediate}
    \begin{aligned}
      \sum_{T \in \mathcal{T}_h} \int_T f \pT \underline{v}_T
      &= - \sum_{T \in \Th} \int_T \nabla \cdot (\sigma_T \nabla u) \pT \underline{v}_T
      \\
      &= \sum_{T \in \Th} \int_T \sigma_T \nabla u \cdot \nabla \pT \underline{v}_T
      - \sum_{T \in \Th} \sum_{E \in \ET} \omega_{TE} \int_E (\sigma_T \nabla u \cdot n_E) \pT \underline{v}_T
      \\
      &= \sum_{T \in \Th} \int_T \sigma_T \nabla u \cdot \nabla \pT \underline{v}_T
      - \sum_{T \in \Th} \sum_{E \in \ET} \omega_{TE} \int_E (\sigma_T \nabla u \cdot n_E) (\pT \underline{v}_T - v_{TE})
      \\
      &\quad
      - \underbrace{\sum_{T \in \Th} \sum_{E \in \ET} \omega_{TE} \int_E (\sigma_T \nabla u \cdot n_E) v_{TE}}_{\mathfrak{T}},
    \end{aligned}
  \end{equation}
  where we have used an integration by parts inside each element in the second equality and inserted $\pm v_{TE}$ into the boundary term to conclude.
  Let us focus on the last term. Rearranging the sums, we have
  \[
  \begin{aligned}
    \mathfrak{T}
    &= \sum_{E \in \Eh} \sum_{T \in \TE} \omega_{TE} \int_E (\sigma_T \nabla u \cdot n_E) v_{TE}
    \\
    &= \sum_{E \in \EGh} \int_E \left[
      (\sigma_{T_\INT} \nabla u_\INT \cdot n_E) v_{T_\INT E}
      - (\sigma_{T_\EXT} \nabla u_\EXT \cdot n_E) v_{T_\EXT E}
      \right],
  \end{aligned}
  \]
  where we have used the fact that the normal trace of $\sigma \nabla u$ is continuous across mesh edges internal to each subdomain along with the fact that $v_{TE} = 0$ on edges contained in $\partial \Omega$ in the second equality.
  We next notice that, given four real numbers $a_1$, $a_2$, $b_1$, and $b_2$, since $\lambda_\INT + \lambda_\EXT = 1$ (cf. \eqref{eq:k.int.ext},
  \[
  a_1 b_1 - a_2 b_2 = (\lambda_\INT a_1 + \lambda_\EXT a_2)(b_1 - b_2) + (a_1 - a_2) (\lambda_\EXT b_1 + \lambda_\INT b_2).
  \]
  Applying this formula with $(a_1, a_2, b_1, b_2) = (\sigma_\INT \nabla u_\INT \cdot n_E, \sigma_\EXT \nabla u_\EXT \cdot n_E, v_{T_\INT E}, v_{T_\EXT E})$,
  recalling the definitions of the interface trace operators of Section~\ref{sec:discrete.problem:trace.operators},
  and using the fact that, by \eqref{eq:strong:jump_j}, $\jump{\sigma \nabla u} \cdot n_E = \Phi_\Gamma$ almost everywhere on $\Gamma$, we infer that
  \[
  \mathfrak{T}
  = \sum_{E \in \EGh} \int_E \avg{\sigma \nabla u} \cdot n_E \jump{\underline{v}_h}
  + \sum_{E \in \EGh} \int_E \Phi_\Gamma \savg{\underline{v}_h}.
  \]
  Plugging this expression into \eqref{eq:lh:intermediate} and substituting into the expression \eqref{eq:lh} of $\ell_h$, we arrive at
  \begin{equation}\label{eq:lh:reformulation}
    \begin{aligned}
      \ell_h(\underline{v}_h)
      &= \sum_{T \in \Th} \int_T \sigma_T \nabla u \cdot \nabla \pT \underline{v}_T
      - \sum_{T \in \Th} \sum_{E \in \ET} \omega_{TE} \int_E (\sigma_T \nabla u \cdot n_E) (\pT \underline{v}_T - v_{TE})
      \\
      &\quad
      - \sum_{E \in \EGh} \int_E \avg{\sigma \nabla u} \cdot n_E \jump{\underline{v}_h}
      + \eta \sum_{E \in \EGh} \frac{\alpha}{h_E} \int_E J_\Gamma \jump{\underline{v}_h}.      
    \end{aligned}
  \end{equation}
  \underline{2. \emph{Reformulation of $a_h(\widehat{\underline{u}}_h, \underline{v}_h)$.}}
  Let $T \in \Th$.
  Writing \eqref{eq:pT.bis} for $\tau = \sigma_T \GT \widehat{\underline{u}}_T$ and rearranging, we obtain
  \[
  \int_T \sigma_T \GT \widehat{\underline{u}}_T \cdot \GT \underline{v}_T
  = \int_T \sigma_T \GT \widehat{\underline{u}}_T \cdot \nabla \pT \underline{v}_T
  - \sum_{E \in \ET} \omega_{TE} \int_E (\sigma_T \GT \widehat{\underline{u}}_T \cdot n_E) (\pT \underline{v}_T - v_{TE}).
  \]
  Substituting this expression in the definition \eqref{eq:ah} of $a_h$ written for $\underline{w}_h = \widehat{\underline{u}}_h$, we obtain
  \begin{equation}\label{eq:ah:reformulation}
    \begin{aligned}
      a_h(\widehat{\underline{u}}_h, \underline{v}_h)
      &\coloneq
      \sum_{T \in \Th} \int_T \sigma_T \GT \widehat{\underline{u}}_T \cdot \nabla \pT \underline{v}_T
      - \sum_{T \in \Th} \sum_{E \in \ET} \omega_{TE} \int_E (\sigma_T \GT \widehat{\underline{u}}_T \cdot n_E) (\pT \underline{v}_T - v_{TE})
      \\
      &\quad
      + \sum_{T \in \Th} \frac{\sigma_T}{h_T} \sum_{E \in \ET} \int_E (\pT \widehat{\underline{u}}_T - \widehat{u}_{TE}) (\pT \underline{v}_T - v_{TE})
      \\
      &\quad
      - \sum_{E \in \EGh} \int_E \avg{\sigma \Gh \widehat{\underline{u}}_h}\cdot n_E \jump{\underline{v}_h}
      + \eta \sum_{E \in \EGh} \frac{\alpha}{h_E} \int_E \jump{\widehat{\underline{u}}_h} \jump{\underline{v}_h}.
    \end{aligned}
  \end{equation}
  \underline{3. \emph{Estimate of the consistency error.}}
  Subtracting \eqref{eq:ah:reformulation} from \eqref{eq:lh:reformulation}, we arrive at the following decomposition of the consistency error:
  \begin{equation}\label{eq:Eh:decomposition}
    \mathcal{E}_h(\underline{v}_h) = \mathfrak{T}_1 + \cdots + \mathfrak{T}_5
  \end{equation}
  with
  \[
  \begin{aligned}
    \mathfrak{T}_1 &\coloneq
    \sum_{T \in \Th} \int_T \sigma_T (\nabla u - \GT \widehat{\underline{u}}_T) \cdot \nabla \pT \underline{v}_T,
    \\
    \mathfrak{T}_2 &\coloneq
    \sum_{T \in \Th} \sum_{E \in \ET} \omega_{TE} \int_E \sigma_T (\GT \widehat{\underline{u}}_T - \nabla u) \cdot n_E (\pT \underline{v}_T - v_{TE}),
    \\
    \mathfrak{T}_3 &\coloneq
    \sum_{T \in \Th} \frac{\sigma_T}{h_T} \sum_{E \in \ET} \int_E (\pT \widehat{\underline{u}}_T - \widehat{u}_{TE}) (\pT \underline{v}_T - v_{TE}),    
    \\
    \mathfrak{T}_4 &\coloneq
    \sum_{E \in \EGh} \int_E \avg{\sigma (\Gh \widehat{\underline{u}}_h - \nabla u)} \cdot n_E \jump{\underline{v}_h}    
    \\
    \mathfrak{T}_5 &\coloneq
    \eta \sum_{E \in \EGh} \frac{\alpha}{h_E} \int_E (J_\Gamma - \jump{\widehat{\underline{u}}_h}) \jump{\underline{v}_h}.
  \end{aligned}
  \]
  We next proceed to estimate the above terms.
  Using Cauchy--Schwarz inequalities along with the fact that $\sigma_T \le \overline{\sigma}$ for all $T \in \Th$, we have for the first term
  \begin{equation}\label{eq:estimate:T1}
    \begin{aligned}
      \mathfrak{T}_1
      &\le \overline{\sigma}^{\nicefrac12} \left(
      \sum_{T \in \Th} \| \nabla u - \GT \widehat{\underline{u}}_T \|_{L^2(T)^2}^2
      \right)^{\nicefrac12}
      \left(
      \sum_{T \in \Th} \sigma_T \| \nabla \pT \underline{v}_T \|_{L^2(T)^2}^2
      \right)^{\nicefrac12}
      \\
      \overset{\eqref{eq:GT:approximation},\,\eqref{eq:est.grad.pT},\,\eqref{eq:energy.norm}}&\lesssim
      \overline{\sigma}^{\nicefrac12} h^{r+1} | u |_{H^{r+2}(\Th)} \| \underline{v}_h \|_{{\rm en},h}.
    \end{aligned}
  \end{equation}
  For the second term, we use on each edge a $(2,\infty,2)$-H\"older inequality on the integral, the fact that $\| n_E \|_{L^\infty(E)^2} \le 1$ along with $\sigma_T \le \overline{\sigma}$,
  and a Cauchy--Schwarz inequality on the sums to write
  \begin{equation}\label{eq:estimate:T2}
    \begin{aligned}
      \mathfrak{T}_2
      &\le \overline{\sigma}^{\nicefrac12}
      \left(
      \sum_{T \in \Th} h_T \| \GT \widehat{\underline{u}}_T - \nabla u \|_{L^2(\partial T)^2}^2
      \right)^{\nicefrac12}
      \left(
      \sum_{T \in \Th} \frac{\sigma_T}{h_T} \sum_{E \in \ET} \| \pT \underline{v}_T - v_{TE} \|_{L^2(E)}^2
      \right)^{\nicefrac12}
      \\
      \overset{\eqref{eq:GT:approximation},\,\eqref{eq:energy.norm}}&\lesssim
      \overline{\sigma}^{\nicefrac12} h^{r+1} | u |_{H^{r+2}(\Th)} \| \underline{v}_h \|_{{\rm en},h}.
    \end{aligned}
  \end{equation}
  Cauchy--Schwarz inequalities along with $\sigma_T \le \overline{\sigma}$ yield for the third term
  \begin{equation}\label{eq:estimate:T3}
    \begin{aligned}
      \mathfrak{T}_3
      &\le \overline{\sigma}^{\nicefrac12}
      \left(
      \sum_{T \in \Th} h_T^{-1} \sum_{E \in \ET} \| \pT \widehat{\underline{u}}_T - \widehat{u}_{TE} \|_{L^2(E)}^2
      \right)^{\nicefrac12} \left(
      \sum_{T \in \Th} \frac{\sigma_T}{h_T} \sum_{E \in \ET} \| \pT \underline{v}_T - v_{TE} \|_{L^2(E)}^2
      \right)^{\nicefrac12}
      \\
      \overset{\eqref{eq:energy.norm}}&\lesssim
      \left[
        \sum_{T \in \Th} h_T^{-1} \sum_{E \in \ET} \left(
        \| \pT \widehat{\underline{u}}_T - \gamma_{TE} u \|_{L^2(E)}^2
        + \| \gamma_{TE} u - \widehat{u}_{TE} \|_{L^2(E)}^2
        \right)
        \right]^{\nicefrac12} \| \underline{v}_h \|_{{\rm en},h}
      \\
      \overset{\eqref{eq:pT:approximation},\,\eqref{eq:vTE:approximation}}&\lesssim
      \overline{\sigma}^{\nicefrac12} h^{r+1} | u |_{H^{r+2}(\Th)} \| \underline{v}_h \|_{{\rm en},h}.
    \end{aligned}
  \end{equation}
  where, in the second inequality, we have additionally used the fact that $(a + b)^2 \le 2(a^2 + b^2)$ after inserting $\pm\gamma_{TE} u$ (the trace of $u_{|T}$ on $E$) inside the norm.
  To estimate the fourth term, we start with $(2,\infty,2)$-H\"older inequalities on the integrals and Cauchy--Schwarz inequalities on the sums and recall the definition \eqref{eq:seminorm.J} to write
  \begin{equation}\label{eq:estimate:T4:basic}
    \begin{aligned}
      \mathfrak{T}_4
      \le \left(
      \sum_{E \in \EGh} \frac{h_E}{\alpha} \| \avg{\sigma (\Gh \widehat{\underline{u}}_h - \nabla u)} \|_{L^2(E)^2}^2
      \right)^{\nicefrac12} | \underline{v}_h |_{{\rm J},h}
    \end{aligned}
  \end{equation}
  Let now $E \in \EGh$ and, using the inequality $(a + b)^2 \le 2(a^2 + b^2)$, write
  \[
  \begin{aligned}
    &\alpha^{-1} \| \avg{\sigma (\Gh \widehat{\underline{u}}_h - \nabla u)} \|_{L^2(E)^2}^2
    \\
    &\quad
    \begin{aligned}[t]
      \overset{\eqref{eq:avg}}&\le
      2\alpha^{-1} \lambda_\INT \sigma_\INT \| \sigma_\INT^{\nicefrac12}  (G_{T_\INT}^k \widehat{\underline{u}}_T - \nabla u_\INT) \|_{L^2(E)^2}^2
      + 2\alpha^{-1} \lambda_\EXT \sigma_\EXT \| \sigma_\EXT^{\nicefrac12} (G_{T_\EXT}^k \widehat{\underline{u}}_T - \nabla u_\EXT) \|_{L^2(E)^2}^2
      \\
      &= \| \sigma_\INT^{\nicefrac12} (G_{T_\INT}^k \widehat{\underline{u}}_T - \nabla u_\INT) \|_{L^2(E)^2}^2
      + \| \sigma_\EXT^{\nicefrac12} (G_{T_\EXT}^k \widehat{\underline{u}}_T - \nabla u_\EXT) \|_{L^2(E)^2}^2
      \\
      \overset{\eqref{eq:GT:approximation}}&\lesssim
      \overline{\sigma}^{\nicefrac12} h^{r+1} | u |_{H^{r+2}(\TE)},
    \end{aligned}
  \end{aligned} 
  \]
  where, in the equality, we have used the fact that, by definition \eqref{eq:alpha} of $\alpha$ and \eqref{eq:k.int.ext} of $\lambda_\bullet$, $2 \alpha^{-1} \lambda_\bullet \sigma_\bullet = 1$ for $\bullet \in \{ \INT, \EXT \}$.
  Plugging the above estimate into \eqref{eq:estimate:T4:basic} and recalling the definition \eqref{eq:energy.norm} of the energy norm, we conclude that
  \begin{equation}\label{eq:estimate:T4}
    \mathfrak{T}_4 \lesssim \overline{\sigma}^{\nicefrac12} h^{r+1} | u |_{H^{r+2}(\Th)}
    \| \underline{v}_h \|_{{\rm en},h}.
  \end{equation}
  Moving to the fifth term, we recall that, by \eqref{eq:strong:jump_phi}, $J_\Gamma = \jumpG{u}$ almost everywhere on $\Gamma$ and use Cauchy--Schwarz inequalities along with the fact that $\alpha \le \frac{2 \sigma_\INT \sigma_\EXT}{2 \min \{ \sigma_\INT, \sigma_\EXT \} } \le \overline{\sigma}$ and the definition \eqref{eq:seminorm.J} of the $| \cdot |_{{\rm J},h}$-seminorm to write
  \begin{equation}\label{eq:estimate:T5}
    \begin{aligned}
      \mathfrak{T}_5 
      &\le \eta \overline{\sigma}^{\nicefrac12} \left(
      \sum_{E \in \EGh} h_E^{-1} \| \jumpG{u} - \jump{\widehat{\underline{u}}_h} \|_{L^2(E)}^2
      \right)^{\nicefrac12} | \underline{v}_h |_{{\rm J},h}
      \\
      \overset{\eqref{eq:jumpG},\,\eqref{eq:jump.savg},\,\eqref{eq:energy.norm}}&\lesssim \overline{\sigma}^{\nicefrac12} \left[
        \sum_{E \in \EGh} h_E^{-1} \left(
        \| \gamma_{T_\INT E} u - \widehat{u}_{T_\INT E} \|_{L^2(E)}^2
        + \| \gamma_{T_\EXT E} u - \widehat{u}_{T_\EXT E} \|_{L^2(E)}^2
        \right)
        \right]^{\nicefrac12} \| \underline{v}_h \|_{{\rm en},h}
      \\
      \overset{\eqref{eq:vTE:approximation}}&\lesssim
      \overline{\sigma}^{\nicefrac12} h^{r+1} | u |_{H^{r+2}(\Th)} \| \underline{v}_h \|_{{\rm en},h}
    \end{aligned}
  \end{equation}
  Plugging the estimates \eqref{eq:estimate:T1}, \eqref{eq:estimate:T2}, \eqref{eq:estimate:T3}, \eqref{eq:estimate:T4}, and \eqref{eq:estimate:T5} into \eqref{eq:Eh:decomposition}, \eqref{eq:Eh:estimate} follows.
\end{proof}

\begin{proof}[Proof of Theorem~\ref{thm:error.estimate}]
  Straightforward consequence of the Third Strang Lemma~\cite[Theorem~10]{Di-Pietro.Droniou:18} accounting for Lemmas~\ref{lem:stability} and~\ref{lem:consistency} above.
\end{proof}

\section*{Acknowledgements}

Daniele Di Pietro acknowledges the funding of the European Union (ERC Synergy, NEMESIS, project number 101115663).
Views and opinions expressed are however those of the authors only and do not necessarily reflect those of the European Union or the European Research Council Executive Agency. Neither the European Union nor the granting authority can be held responsible for them.

The authors are grateful to Prof. Matthieu Hillairet (University of Montpellier) for the precious discussions about the evolutionary potential jump 
problem described in Section~\ref{sec:ldm}.

\printbibliography
\end{document}